\documentclass[11pt]{article}
\usepackage[margin=1.1in]{geometry}
\usepackage{amsmath,amssymb,amsthm}
\usepackage{hyperref}

\newtheorem{theorem}{Theorem}[section]
\newtheorem{lemma}[theorem]{Lemma}
\newtheorem{proposition}[theorem]{Proposition}
\newtheorem{conjecture}[theorem]{Conjecture}
\newtheorem{corollary}[theorem]{Corollary}
\theoremstyle{definition}
\newtheorem{remark}[theorem]{Remark}
\newtheorem{problem}[theorem]{Problem}

\newcommand{\Z}{\mathbb{Z}}
\newcommand{\killop}{\operatorname{kill}}

\title{Exact values and exact upper bounds for families of integers with arithmetic progression intersections\\ \large (Erd\H{o}s Problem \#272)}
\author{Zhanfu Yang\thanks{Email: \texttt{yangzhanfu111@gmail.com}. Code for all computations is available at \url{https://github.com/peter-rich/erdos272}. See the Acknowledgements for a note on AI assistance.}}
\date{July 2026}

\begin{document}
\maketitle

\begin{abstract}
Let $t(N)$ be the largest $t$ for which there exist distinct sets $A_1,\dots,A_t\subseteq\{1,\dots,N\}$ such that $A_i\cap A_j$ is a nonempty arithmetic progression for all $i\neq j$ (Erd\H{o}s Problem \#272). Simonovits and S\'os proved $t(N)=O(N^2)$ and conjectured that $\binom N2+1$ is best possible; Szab\'o disproved this by a construction giving $t(N)\ge\binom N2+1+\lfloor(N-1)/4\rfloor$, proved the asymptotics $t(N)=N^2/2+O(N^{5/3}(\log N)^3)$, and asked whether $t(N)=\binom N2+O(N)$ and whether some element lies in all sets of any extremal family (the kernel question). We determine $t(N)$ exactly for all $N\le 12$ by exhaustive computation: in this entire range Szab\'o's lower bound is exact, and we conjecture that $t(N)=\binom N2+1+\lfloor(N-1)/4\rfloor$ for every $N$, a sharpening of Szab\'o's conjecture. Towards the matching upper bound we prove, for every $N$, that Szab\'o's bound is the exact maximum over \emph{all} families with a common element (\emph{starred} families). The proof combines a self-contained ``defect-one'' counting inequality for staircase regions --- established in full by an exact dynamic program over all staircase profiles with parameters up to $500$ together with an asymptotic argument based on a positive-definite quadratic form --- with a new structural theorem: every non-progression member of such a family contains a bad pair that no other member can share. Consequently the sharpened conjecture reduces to a single remaining statement, namely Szab\'o's kernel conjecture that some element lies in all sets of an extremal family, and we prove first structural constraints on putative non-starred extremal families. We also verify the conjectured value within broader regimes (all starred families for $N\le13$; a structured class for $N\le61$) and report the sequence $t(3),\dots,t(12)=4,7,12,17,23,30,39,48,58,69$, which does not yet appear in the OEIS.
\end{abstract}

\section{Introduction}

Throughout, $[N]=\{1,\dots,N\}$, and an \emph{arithmetic progression} (AP) is any set of the form $\{a, a+d, \dots, a+(k-1)d\}$ with $k\ge 1$; in particular every set of size $1$ or $2$ is an AP. Define
\[
 t(N) \;=\; \max\Big\{ t : \exists\, \text{distinct } A_1,\dots,A_t\subseteq [N] \text{ with } A_i\cap A_j \text{ a nonempty AP for all } i\ne j \Big\}.
\]
This is Problem \#272 in the Erd\H{o}s problems database \cite{EP272}; it originates with Simonovits and S\'os \cite{SiSo81}, and appears in Erd\H{o}s and Graham \cite{ErGr80}. Simonovits and S\'os proved $t(N)\ll N^2$ \cite{SiSo81}. Erd\H{o}s and Graham asked whether the maximum is attained by all APs containing a fixed element (``presumably $\lfloor N/2\rfloor$''), of size $\sim\frac{\pi^2}{24}N^2$; Simonovits and S\'os observed that all sets of size at most $3$ containing a fixed element do better, giving $t(N)\ge\binom N2+1$, and conjectured this to be best possible \cite{SiSo81,EP272}. If empty intersections are allowed, Graham, Simonovits and S\'os \cite{GSS80} showed the maximum is exactly $\binom N3+\binom N2+\binom N1+1$.

Simonovits and S\'os \cite{SiSo81} proved the upper bound $t(N)\le(\pi^2/24+1/2+o(1))N^2$. The deepest results to date are due to Szab\'o \cite{Sz99}: he identified the leading term, proving the asymptotics
\[ t(N) = \frac{N^2}{2} + O\big(N^{5/3}(\log N)^3\big), \]
disproved the Simonovits--S\'os exactness conjecture by a construction achieving
\begin{equation}\label{eq:szabo}
 t(N)\ \ge\ \binom N2+1+\Big\lfloor\frac{N-1}{4}\Big\rfloor,
\end{equation}
and raised two precise questions \cite[\S6]{Sz99}, referred to at \cite{EP272} as Szab\'o's conjectures: (i) is $t(N)=\binom N2+O(N)$, and (ii) the \emph{kernel property}: does every extremal family have an integer contained in all its sets? He also exhibited several inequivalent families attaining \eqref{eq:szabo}, so extremal families are in any case not unique. The exact value of $t(N)$ has remained open for every $N\ge 5$; indeed \cite[\S6]{Sz99} poses the determination of $t(N)$ and of the extremal systems as an open problem.

Our first result determines the exact values for small $N$.

\begin{theorem}[Exact values]\label{thm:exact}
For $N=3,4,\dots,12$,
\[ t(N) \;=\; 4,\ 7,\ 12,\ 17,\ 23,\ 30,\ 39,\ 48,\ 58,\ 69. \]
\end{theorem}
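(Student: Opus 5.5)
The plan has two halves: lower bounds by explicit construction, and matching upper bounds by exhaustive computer search, as the abstract indicates.

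\textbf{Lower bounds.} The listed values coincide with $\binom N2+1+\lfloor (N-1)/4\rfloor$ for $3\le N\le 12$. For example, $N=5$ gives $10+1+1=12$ and $N=12$ gives $66+1+2=69$. So the lower bounds follow directly from Szab\'o's construction \eqref{eq:szabo}. For reproducibility we would still write out an explicit witness family for each $N$ and machine-check it. The check is that the family has $t$ distinct sets and that every pairwise intersection is a nonempty AP.

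\textbf{Upper bounds: reformulation.} We recast the problem as a maximum-clique problem. Let $G_N$ be the graph on the nonempty subsets of $[N]$, with $A\sim B$ whenever $A\cap B$ is a nonempty AP. Then $t(N)=\omega(G_N)$.

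\textbf{Upper bounds: pruning.} The graph has $2^N-1\approx 4095$ vertices at $N=12$, so we must exploit symmetry and structure before searching.
\begin{itemize}
\item The reflection $x\mapsto N+1-x$ preserves APs, so we can break this symmetry.
\item Members of size at most $2$ always intersect every other member in an AP, provided the intersection is nonempty. It is therefore natural to branch on the large members, which impose most constraints.
\item Vertices of small degree can be deleted repeatedly: any vertex of degree less than $L-1$, where $L$ is the target clique size, cannot lie in a clique of size $L$.
\end{itemize}

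\textbf{Upper bounds: search.} To prove $t(N)\le v_N$, we ask a clique solver (Östergård-style branch and bound with colouring bounds, or a SAT/ILP formulation) whether a clique of size $v_N+1$ exists, and certify that none does. Monotonicity helps organise the computation: restricting a family on $[N]$ to $[N-1]$ keeps every intersection an AP, although distinct sets may merge. This gives an inductive bound of the form $t(N)\le t(N-1)+(\text{number of sets containing } N \text{ that collapse})$, which can guide the search order. We would also cross-check results by running two independent solvers.

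\textbf{Main obstacle.} The hard step is the final exhaustive certification for $N=11$ and $N=12$. Plain clique search on roughly 4000 vertices at density close to $1/2$, seeking cliques of size around 70, is likely infeasible. I would first try branching on the multiset of sizes of the members of size at least $4$. A non-AP set of size $k$ forbids many supersets and subsets from coexisting with it, so few large sets can appear together. I would then enumerate feasible collections of large sets up to symmetry, and for each one bound the number of compatible sets of size at most $3$ exactly, via bipartite or clique computations on the much smaller residual graph.
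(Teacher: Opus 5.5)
Your lower bound (Szab\'o's construction, machine-checked) and your treatment of $N\le 10$ (maximum clique in $G_N$ by colouring-bounded branch and bound, with an independent cross-check) agree with the paper. The gap is at $N=11,12$. You correctly identify these as the computational crux, but you give only a tentative plan there, and its central step does not work as stated. You cannot enumerate the feasible collections of members of size $\ge 4$. For $N=12$, the progressions of length $\ge 4$ through a single point already form a clique of $43$ large sets. Nor can you restrict to maximal collections, because fewer large members leave room for more compatible small ones. Organised instead as a branch and bound, your size-profile branching is just the clique search you yourself judged infeasible, unless some strong bound closes most branches.

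That bound is the idea you are missing. The paper's decision search decomposes into independent subproblems rooted at the minimum vertex in ascending-degree order (which your \"{O}sterg\aa{}rd-style solver essentially provides) and applies iterated core peeling; this sufficed for $N=11$. For $N=12$ the paper adds \emph{star pruning}. One first shows that every starred family in $[12]$ has at most $69$ members; the paper does this by exhaustive search over each common element up to reflection, and it also follows from Theorem~\ref{thm:upper}. Then, while searching for a $70$-clique, one discards every branch in which the bitwise AND of the current members and all remaining candidates is nonzero, since every completion of such a branch is starred. This cuts the many branches heading towards a star (the sets of size $\le 3$ through one point already form a clique of size $\binom N2+1$) and leaves only non-starred completions.

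Separately, your monotonicity remark is false as stated. Deleting $N$ keeps intersections APs but not nonempty: if $A\cap B=\{N\}$, the restrictions become disjoint. For example, the star of all sets of size $\le 3$ through $N$ has $\binom N2+1$ members, none of which merge under restriction, yet at $N=12$ this gives $67>t(11)=58$. This does no harm only because you use it merely to order the search.
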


In particular, \emph{Szab\'o's lower bound \eqref{eq:szabo} is exact for every $3\le N\le 12$}. The values for $3\le N\le 9$ have also been reported in the discussion thread of \cite{EP272}; to our knowledge the values $t(10)=48$, $t(11)=58$ and $t(12)=69$ are new, as is the verification, for $N=11,12$, that no larger family exists over the full unrestricted search space. Theorem~\ref{thm:exact} is computer-assisted: $t(N)$ is the clique number of the graph on the $2^N-1$ nonempty subsets of $[N]$ with adjacency $A\sim B$ iff $A\cap B$ is a nonempty AP. For $N\le 10$ this was computed by a bit-parallel branch-and-bound solver (cross-validated by an independent implementation); for $N=11,12$ a decision search based on a root decomposition, degeneracy-style ordering, core peeling and a rigorous ``star pruning'' step (Section~\ref{sec:comp}) proved that no family of size $59$ (resp.\ $70$) exists, matching the construction below. The sequence does not appear in the OEIS.

All ten values equal Szab\'o's lower bound \eqref{eq:szabo}. In Section~\ref{sec:construction} we give a short self-contained account of Szab\'o's construction achieving \eqref{eq:szabo}, with a validity proof of a few lines (we rediscovered it independently before locating the attribution).

\begin{theorem}[Szab\'o \cite{Sz99}; see Section~\ref{sec:construction} for a self-contained proof]\label{thm:lower}
For all $N\ge 1$,
\[ t(N)\ \ge\ \binom{N}{2}+1+\Big\lfloor \frac{N-1}{4}\Big\rfloor. \]
\end{theorem}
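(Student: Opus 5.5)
We take a \emph{starred} family built from the Simonovits--S\'os family, and add $\lfloor (N-1)/4\rfloor$ local gadgets, each contributing a net gain of one set.

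\textbf{Base family and centre.} Fix the centre $c=\lceil N/2\rceil$ and write every other element as $c+x$ with $x\neq 0$. Start from
\[
\mathcal F_0=\big\{\{c\}\big\}\cup\big\{\{c,y\}\big\}\cup\big\{\{c,y,z\}\big\},
\]
the family of all sets of size at most $3$ containing $c$. It has $1+(N-1)+\binom{N-1}{2}=\binom N2+1$ members. Any two members meet in a set containing $c$ of size at most $2$, which is an AP.

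\textbf{The gadget for one parameter $a$.} For a parameter $a\ge1$ with $c\pm 2a\in[N]$, relabel so that $c=0$. Remove the two non-AP triples
\[
\{-2a,0,a\},\qquad \{-a,0,2a\},
\]
and add the three progressions
\[
P_a=\{-2a,-a,0,a,2a\},\qquad L_a=\{-2a,-a,0,a\},\qquad R_a=\{-a,0,a,2a\}.
\]
This gives a net gain of $+1$. The checks within one gadget are as follows.
\begin{itemize}
\item $P_a\cap L_a=L_a$ and $P_a\cap R_a=R_a$ are APs, and $L_a\cap R_a=\{-a,0,a\}$ is an AP.
\item If a new set $S$ meets a remaining triple $\{0,y,z\}$ in three points, the intersection is that triple itself. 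The only triples containing $0$ inside $P_a$ that are not APs are exactly the two removed ones.
\item For the $4$-sets: the triples through $0$ in $L_a$ are $\{-2a,-a,0\}$, $\{-a,0,a\}$ and the removed $\{-2a,0,a\}$, and symmetrically for $R_a$.
\item Intersections with pairs and with $\{c\}$ have size at most $2$ and contain $c$.
\end{itemize}
So one gadget produces a valid family.

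\textbf{Choice of parameters.} Take $a=1,\dots,k$ with $2k\le \min(c-1,N-c)=\lfloor (N-1)/2\rfloor$. This allows
\[
k=\big\lfloor \lfloor (N-1)/2\rfloor/2\big\rfloor=\lfloor (N-1)/4\rfloor
\]
gadgets. The removed triples $\{-2a,0,a\}$ and $\{-a,0,2a\}$ determine $a$, so different gadgets remove disjoint sets of triples. The total is therefore $\binom N2+1+\lfloor (N-1)/4\rfloor$.

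\textbf{Main obstacle: cross-gadget intersections.} It remains to check intersections between members of different gadgets $a<b$. All such sets contain $0$ and lie in $\{0,\pm a,\pm 2a\}$, resp.\ $\{0,\pm b,\pm 2b\}$.
\begin{itemize}
\item If $b\ne 2a$, the only common points are $0$ and possibly nothing else, since $a\ne b$, $2a\ne 2b$ and $a\ne 2b$. The intersection is then $\{0\}$.
\item If $b=2a$, the common points lie in $\{0,\pm 2a\}$. Every intersection is one of $\{0\}$, $\{0,\pm2a\}$ or $\{-2a,0,2a\}$, all of which are APs.
\end{itemize}
Hence the family is valid, and this proves Theorem~\ref{thm:lower}. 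I expect this case check to be the only step needing care. If it failed for some overlapping parameters, I would fall back on using only odd $a$ together with a different centre. That fallback may lose the exact count, so the case analysis above is the route I would try first.
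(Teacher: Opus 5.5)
Your proposal is correct and is the same construction as the paper's: all sets of size at most $3$ through the centre $m=\lceil N/2\rceil$, minus the $2k$ non-AP triples $\{m-2d,m,m+d\}$ and $\{m-d,m,m+2d\}$, plus the window $P_d$ and its two four-term sub-APs through $m$. Your validity argument also matches the paper's, via the six triples through $m$ inside $P_d$. The only difference is that the paper handles your cross-gadget case analysis in one line, by noting that the intersection of two arithmetic progressions is always an arithmetic progression, so the fallback you mention is never needed.
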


\begin{conjecture}[Sharpening of Szab\'o's conjecture]\label{conj:main}
Equality holds in Theorem~\ref{thm:lower} for every $N\ge 1$.
\end{conjecture}

Conjecture~\ref{conj:main} implies both parts of Szab\'o's conjecture: it gives $t(N)=\binom N2+O(N)$ in the strongest form $t(N)=N^2/2-N/4+O(1)$, and our verification below is consistent with the kernel property.

In particular the classical family of all $\le 3$-element sets through a fixed point is not optimal for any $N\ge 5$ (as Szab\'o's construction already shows), and the family of all APs through a fixed point is optimal only for $N\in\{5,9\}$, where its count coincides with \eqref{eq:szabo}. The extremal families interpolate between the two classical candidates; see Section~\ref{sec:construction}.

Our main new theorem is an exact matching upper bound valid for \emph{every} $N$ over all starred families; to our knowledge no exact upper bound of this kind was previously known for any $N\ge5$. Call a family \emph{starred} if all its members contain a common element.

\begin{theorem}[Exact upper bound for starred families]\label{thm:upper}
Let $F$ be any starred family of distinct subsets of $[N]$ with pairwise nonempty AP intersections. Then $|F|\le \binom N2+1+\lfloor (N-1)/4\rfloor$. In particular, by Theorem~\ref{thm:lower}, the maximum size of a starred family equals $\binom N2+1+\lfloor (N-1)/4\rfloor$ for every $N\ge1$.
\end{theorem}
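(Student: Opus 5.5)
Let the common element be $c$. The approach is to split $F$ by whether a member is itself an AP, and to bound the non-AP members by a counting inequality that charges each of them to pairs it alone contains.

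\textbf{Normalization and split.} Write $F=F_{\mathrm{AP}}\cup F_{\mathrm{bad}}$, where $F_{\mathrm{bad}}$ consists of the members that are not APs. Every member of $F$ contains $c$. Sets of size at most $2$ are APs, so every $B\in F_{\mathrm{bad}}$ has $|B|\ge3$.

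\textbf{Pairs through $c$.} For a set $S\ni c$ with $|S|\ge 2$, consider the pairs $\{x,y\}\subseteq S\setminus\{c\}$. Call $\{x,y\}$ \emph{bad for $S$} if $\{c,x,y\}$ is not an AP. Two members $A,B$ both containing $c,x,y$ have $A\cap B\supseteq\{c,x,y\}$. This is harmless when $\{c,x,y\}$ is an AP, and it may still be allowed when it is not, since $A\cap B$ can be a longer AP containing $\{c,x,y\}$.

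The structural step is the claim that every non-AP member $B$ contains a bad pair $\{x,y\}$, meaning $B\cap\{x,y,c\}$ is not contained in any AP that lies inside another member's intersection with $B$, which no other member of $F$ contains. I would prove this by taking a minimal non-AP subset of $B$ through $c$, i.e.\ a triple $\{c,x,y\}$ or a short configuration witnessing that $B$ is not an AP. If two members shared it, their intersection would have to be an AP containing it; this forces a contradiction when the witness is chosen extremally, for example with $x,y$ nearest to $c$ among the elements breaking the progression.

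\textbf{Injection and counting.} The previous step gives an injection from $F_{\mathrm{bad}}$ into the set of pairs $\{x,y\}\subseteq[N]\setminus\{c\}$ that are not in AP with $c$. Each such pair is used by at most one member. The AP members through $c$ are controlled by the same pairs: an AP through $c$ is determined by its two extreme elements, or by $\{c\}$ alone. The goal is then the bound
\[|F_{\mathrm{AP}}|+|F_{\mathrm{bad}}|\le \binom N2+1+\Big\lfloor\frac{N-1}4\Big\rfloor.\]
The baseline $\binom{N-1}{2}+(N-1)+1=\binom N2+1$ comes from all sets of size at most $3$ through $c$. The excess $\lfloor (N-1)/4\rfloor$ must come from longer APs through $c$ that avoid collisions with one another. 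Two APs through $c$ with different differences intersect in an AP (arithmetic progressions intersect in an AP), but distinct members may not create conflicts with the triples used.

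Parametrizing the chosen sets by (left extent, right extent) around $c$ turns the feasible configurations into staircase regions in a grid. The resulting count is a ``defect-one'' inequality: the number of sets exceeds the number of pairs consumed by at most $1$ per block of four, plus $1$ overall.

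\textbf{Main obstacle.} The hard part is exactly this staircase inequality, uniformly in $N$ and in the position of $c$. I would first try induction on the profile, deleting an extreme row or column. Failing that, I would take the paper's route: verify all staircase profiles with parameters up to a bound (say $500$) by an exact dynamic program, and for large parameters show that the excess function is dominated by a positive-definite quadratic form in the profile's parameters. That domination makes any violation impossible beyond the computed range, which closes the argument and gives the stated equality with Theorem~\ref{thm:lower}.
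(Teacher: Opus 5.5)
There is a genuine gap. Your outline follows the paper's architecture: count triples through $c$ by their link pairs, charge non-AP members to private bad pairs, and absorb the long APs with a defect-one staircase inequality proved by a DP plus a quadratic tail. But the step that actually produces the excess $\lfloor (N-1)/4\rfloor$ is missing.

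\textbf{The per-difference decomposition.} An AP through $c$ is not described by (left extent, right extent) alone; it also has a difference $d$. The staircase inequality has to be applied separately on each line $d$, that is, on cells $(l,r)$ with $ld\le c-1$ and $rd\le N-c$. On that line the defect is $\varepsilon_d=1$ only when $2d\le\min(c-1,N-c)$, so the defects sum to $\lfloor\min(c-1,N-c)/2\rfloor\le\lfloor(N-1)/4\rfloor$. Your ``$1$ per block of four, plus $1$ overall'' is not this statement.

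Per-line bounds are also only additive if the kills credited to different lines are disjoint. That is why the paper counts only \emph{line-primitive} bad pairs, i.e.\ pairs with absolute gcd exactly $d$. With full coverage the pools overlap: the bad pair $\{c-2,c+4\}$ lies both in the interval $[c-2,c+4]$ and in $\{c-2,c,c+2,c+4\}$, so summing full-coverage inequalities over $d$ double-counts.

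Your alternative accounting fails as well. An AP through $c$ is not determined by its extremes: $[c-4,c+4]$ and $\{c-4,c-2,c,c+2,c+4\}$ share them. Moreover, the extreme pair $\{c-4,c+4\}$ is good and is already paid for by the triple $\{c-4,c,c+4\}$. The correct mechanism is the kill of Proposition~\ref{prop:red}: a bad pair $\{u,v\}$ inside a member of size $\ge4$ excludes the triple $\{c,u,v\}$. Since nested APs share bad pairs, the staircase lemma must be a \emph{coverage} inequality rather than an injection. Your claim that ``each pair is used by at most one member'' is false for AP members.

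\textbf{The private-pair step.} For non-AP members of size $\ge4$ you assert this rather than prove it. For non-AP triples it is immediate, since two members sharing $\{c,x,y\}$ would meet in that non-AP triple. The ``minimal non-AP witness, nearest to $c$'' heuristic is not an argument. A natural minimal witness, three consecutive elements with unequal gaps, need not contain $c$. In that case it yields no pair that is bad relative to $c$, which is what the counting requires.

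The paper makes privacy an intrinsic property of $z$. If another member contains a bad pair $\{u,v\}$ of $z$, their intersection is an AP through $0,u,v$. Hence $z$ contains every multiple of some $\delta\mid\gcd(|u|,|v|)$ in the hull of $\{0,u,v\}$; call such a pair spanned. The paper then supposes every bad pair of $z$ is spanned and takes the least spanning step $\delta_0$. This forces $z\subseteq\delta_0\mathbb{Z}$, using the pairs $\{s\delta_0,w\}$, or $\{s\delta_0/2,2t\delta_0\}$ in the exceptional case $w=s\delta_0/2$. After rescaling to $\delta_0=1$, the pairs $\{\pm1,w\}$ force $z$ to be an interval, contradicting crookedness. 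Privacy then makes these pairs automatically distinct and disjoint from all AP pools, which is what allows the two counts to be added.
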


Theorem~\ref{thm:upper} rests on two ingredients proved here: a self-contained ``defect-one'' counting inequality for staircase regions (Lemma~\ref{lem:B}), whose proof is partly computer-assisted, via an exact dynamic program covering all parameter profiles up to $500$ and an explicit analytic tail argument beyond; and a structural \emph{private-pair theorem} (Theorem~\ref{thm:private}) handling non-progression members. Consequently Conjecture~\ref{conj:main} would follow in full from a single further statement, Szab\'o's kernel conjecture that extremal families are starred (Problem~\ref{prob:kernel}); see Corollary~\ref{cor:kernel}. For $N\le 12$ no starredness assumption is needed: the extremal value itself is confirmed unconditionally.

\begin{remark}
We have consulted \cite{Sz99} directly. The construction of Section~\ref{sec:construction} coincides with the family $\mathcal C$ of \cite[\S5]{Sz99} (we had rediscovered it independently before locating the attribution). No exact values of $t(N)$ and no exact upper bounds in any regime appear in \cite{Sz99} or, to our knowledge, elsewhere; Theorem~\ref{thm:exact}, the sharpened Conjecture~\ref{conj:main}, Theorem~\ref{thm:upper} and the reduction of the conjecture to the kernel question alone appear to be new. Theorem~\ref{thm:upper} may be read as the exact complement of Szab\'o's kernel question: it determines the extremal size \emph{given} a common element, for every $N$.
\end{remark}

\section{A construction achieving Szab\'o's bound}\label{sec:construction}

Let $m=\lceil N/2\rceil$, so $\min(m-1,N-m)=\lfloor (N-1)/2\rfloor$, and set $k=\lfloor (N-1)/4\rfloor$; then $m\pm 2d\in[N]$ for all $1\le d\le k$. Define $F$ to consist of:
\begin{itemize}
\item[(i)] $\{m\}$ and all pairs $\{m,x\}$, $x\ne m$ \quad ($N$ sets);
\item[(ii)] all $3$-sets $\{m,u,v\}$ \emph{except} the $2k$ blocked triples
\[ B_d=\{m-2d,\ m,\ m+d\},\qquad B'_d=\{m-d,\ m,\ m+2d\}\qquad (1\le d\le k); \]
\item[(iii)] for each $1\le d\le k$, three APs: the five-term window
$P_d=\{m-2d,\,m-d,\,m,\,m+d,\,m+2d\}$
and its two four-term sub-APs containing $m$, namely $P_d\setminus\{m+2d\}$ and $P_d\setminus\{m-2d\}$.
\end{itemize}
The gap pattern $\{d,2d\}$ determines $d$, so the sets $B_d,B'_d$ are pairwise distinct, and none of them is an AP; hence
\[ |F| = N + \Big[\binom{N-1}{2}-2k\Big] + 3k = \binom N2 + 1 + k. \]

\begin{proof}[Proof of validity]
Every member contains $m$, so all pairwise intersections are nonempty. The intersection of two APs is an AP; the intersection of two distinct $3$-sets through $m$ has at most two elements and contains $m$; and the intersection of a member of size $\le 2$ with anything is a subset of a $2$-set. The only case needing care is a triple $T$ of type (ii) against an AP $P$ of type (iii). If $|T\cap P|\le 2$ the intersection is an AP. If $|T\cap P|=3$ then $T\subseteq P\subseteq P_d$ for some $d\le k$. Among the $\binom 42=6$ triples through $m$ inside $P_d$, exactly four are APs, and the two non-APs are precisely $B_d$ and $B'_d$, which are excluded from $F$. Hence $T$ is an AP and $T\cap P=T$ is an AP.
\end{proof}

The construction has been machine-verified (full pairwise check) for all $N\le 40$.

\section{Reduction of the upper bound}\label{sec:reduction}

Fix $m\in[N]$ and let $F$ be a starred family through $m$; write $\lambda=m-1$, $\rho=N-m$. Split $F=X\cup Y\cup Z$ into members of size $\le 2$, exactly $3$, and $\ge 4$. Call a pair $\{u,v\}\subseteq[N]\setminus\{m\}$ \emph{bad} if $\{m,u,v\}$ is not an AP, and let $\killop(Z)$ be the number of bad pairs contained in at least one member of $Z$.

\begin{proposition}\label{prop:red}
$|F|\ \le\ N+\binom{N-1}{2}+\big(|Z|-\killop(Z)\big)$.
\end{proposition}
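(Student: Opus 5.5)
The plan is to bound the three parts $X$, $Y$, $Z$ of the decomposition separately.

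\emph{Bounding $X$.} Every member of $F$ contains $m$, so each member of size at most $2$ is either $\{m\}$ or a pair $\{m,x\}$ with $x\ne m$. There are exactly $N$ such sets, so $|X|\le N$.

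\emph{Bounding $Y$.} Every member of $Y$ has the form $\{m,u,v\}$ with $\{u,v\}\subseteq[N]\setminus\{m\}$. Such triples correspond bijectively to pairs $\{u,v\}$, so there are $\binom{N-1}{2}$ candidates. The key claim is that if $\{u,v\}$ is a bad pair contained in some $Z$-member $S$, then $T=\{m,u,v\}\notin Y$. To see this, suppose $T\in F$. Since $m,u,v\in S$, we get $T\cap S=T$. But $T$ is a non-AP exactly because the pair is bad, so $T\cap S$ is not an AP, contradicting the intersection condition. We also need $T\ne S$, which holds since $|S|\ge 4>3=|T|$. Hence each of the $\killop(Z)$ bad pairs covered by $Z$ removes a distinct candidate triple (distinct pairs give distinct triples), and so $|Y|\le\binom{N-1}{2}-\killop(Z)$.

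\emph{Conclusion.} Adding the bounds gives $|F|=|X|+|Y|+|Z|\le N+\binom{N-1}{2}-\killop(Z)+|Z|$, which is the stated inequality. Nothing here is a real obstacle. The only points to watch are that the correspondence between pairs and triples through $m$ is injective, and that a $Z$-member can never equal a triple. The substantive work comes later, in bounding $|Z|-\killop(Z)$ by $\lfloor (N-1)/4\rfloor+1-\ldots$ via Lemma~\ref{lem:B} and Theorem~\ref{thm:private}.
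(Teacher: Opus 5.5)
Your proof is correct and follows the paper's argument: $|X|\le N$ by counting the sets through $m$ of size at most $2$, and each killed bad pair $\{u,v\}$ excludes the triple $\{m,u,v\}$, since that triple would be its own non-AP intersection with a covering member of $Z$. Your closing aside misstates the later bound: what the paper proves is $|Z|-\killop(Z)\le\lfloor\min(m-1,N-m)/2\rfloor$, and the $+1$ in the final count comes instead from $N+\binom{N-1}{2}=\binom N2+1$.
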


\begin{proof}
Clearly $|X|\le N$. If a bad pair $\{u,v\}$ lies inside some $z\in Z$, then $T=\{m,u,v\}$ satisfies $T\subseteq z$, so $T\cap z=T$ would have to be an AP; hence $T\notin F$. Thus the triples of $Y$ avoid all killed bad pairs and $|Y|\le\binom{N-1}{2}-\killop(Z)$.
\end{proof}

Passing to relative coordinates $x\mapsto x-m$, consider the relation
$R=\{(x,y): y\in\{-x,\,2x,\,x/2\}\}$ on $\Z\setminus\{0\}$;
a pair is bad iff it is not an $R$-edge.

\begin{lemma}[Triangle-freeness]\label{lem:tri}
$R$ contains no triangle. Consequently every $z$ with $|z|\ge 4$ contains a bad pair, so a single member has $|\{z\}|-\killop(\{z\})\le 0$.
\end{lemma}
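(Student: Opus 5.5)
The plan is to work entirely with ratios. Recall from the passage to relative coordinates that for nonzero distinct $x,y$ the triple $\{0,x,y\}$ is an AP exactly when one of its three elements is the midpoint of the other two. If $0$ is the midpoint then $y=-x$; if $x$ is the midpoint then $y=2x$; if $y$ is the midpoint then $x=2y$. So $R$ is the symmetric relation $x\sim y \iff y/x\in\{-1,2,1/2\}$. The only job is therefore to show that no three distinct nonzero integers are pairwise related.

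Suppose $x,y,z$ formed a triangle. I would set $a=y/x$, $b=z/y$, $c=x/z$. Each of $a,b,c$ lies in $S=\{-1,2,1/2\}$, and telescoping gives $abc=1$. I then rule this out by two invariants.

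\emph{Sign.} Positivity of $abc$ forces an even number of the factors to equal $-1$, so either zero or two of them.
\begin{itemize}
\item If two factors equal $-1$, the third must equal $1\notin S$, which is impossible.
\item If none equals $-1$, each factor is $2^{\pm1}$. Then $abc=2^{e}$, where $e$ is a sum of three odd integers. Such a sum is odd, hence nonzero, so $abc\neq 1$.
\end{itemize}
Both cases are contradictory, so $R$ is triangle-free. I expect no real obstacle here. The only point needing care is to confirm that the characterisation of bad pairs covers all three midpoint possibilities, so that $S$ is exactly $\{-1,2,1/2\}$.

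For the consequence, take $z\in Z$ with $|z|\ge 4$. Since the family is starred through $m$, $z$ contains $m$ and at least three other elements. Their relative coordinates are three distinct nonzero integers. If every pair among them were good, these three would be pairwise $R$-related, i.e.\ form a triangle, contradicting what was just shown. Hence $z$ contains a bad pair, so $\killop(\{z\})\ge 1=|\{z\}|$, which gives $|\{z\}|-\killop(\{z\})\le 0$.
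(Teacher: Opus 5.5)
Your proof is correct, but it takes a different route from the paper's. The paper argues locally. Given related $a,b$, it lists the $R$-partners of each and observes that the two lists are disjoint, so no third element can be related to both: for $b=-a$ these are $\{-a,2a,a/2\}$ and $\{a,-2a,-a/2\}$, the case $b=2a$ is handled the same way, and $b=a/2$ follows by symmetry.

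You instead take the ratios around the triangle, which multiply to $1$, and rule this out with two invariants: the sign, and the exponent of $2$. This implicitly uses that $\{-1,2,\tfrac12\}$ is closed under inversion, so $R$ is symmetric and all three ratios can be read in one direction around the cycle; that holds.

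What your invariant argument buys is generality. For any closed walk of length $k$, the number of factors equal to $-1$ must be even, and the $2$-exponents then sum to a number with the parity of $k$. So $R$ has no odd cycles at all, i.e.\ $R$ is bipartite. An explicit proper $2$-colouring is $x\mapsto v_2(|x|)+\mathbf{1}[x<0]$ taken modulo $2$.

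The paper's enumeration is shorter for the triangle case alone. It also matches the partner-listing style the paper reuses in the proof of the private-pair theorem, where the partners $-s\delta_0,\,2s\delta_0,\,s\delta_0/2$ are examined directly.

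Your derivation of the consequence is what the paper intends. The three nonzero relative coordinates in $z$ would form a triangle if all three pairs were good, so $z$ contains a bad pair, giving $\killop(\{z\})\ge 1$.
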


\begin{proof}
Suppose $a,b,c\in\Z\setminus\{0\}$ are pairwise $R$-related. If $b=-a$ then $c\in\{-a,2a,a/2\}\cap\{a,-2a,-a/2\}=\emptyset$. If $b=2a$ then $c\in\{-a,2a,a/2\}\cap\{-2a,4a,a\}=\emptyset$. The case $b=a/2$ is symmetric.
\end{proof}

Now suppose all members of $Z$ are APs. A member with common difference $d$ is, in relative coordinates, an interval $[-l,r]\cdot d$ on ``line $d$'' containing $0$, with $l+r\ge 3$, $ld\le\lambda$, $rd\le\rho$. Call a pair $\{a,b\}$ of line-$d$ coordinates \emph{primitive} if $\gcd(|a|,|b|)=1$. A primitive bad pair of line $d$ has $\gcd$ exactly $d$ in absolute coordinates, so:

\begin{quote}\emph{the primitive bad pairs of distinct lines are pairwise disjoint families of killed pairs.}\end{quote}

Moreover, writing $\varepsilon_d=1$ if $\lfloor\lambda/d\rfloor\ge 2$ and $\lfloor\rho/d\rfloor\ge 2$, and $\varepsilon_d=0$ otherwise,
\begin{equation}\label{eq:epsum}
 \sum_{d\ge 1}\varepsilon_d \;=\; \#\{d:\ d\le \lambda/2,\ d\le\rho/2\}\;=\;\Big\lfloor\frac{\min(\lambda,\rho)}{2}\Big\rfloor .
\end{equation}
Note also that badness is scale-invariant: $\{m,m+ad,m+bd\}$ is an AP iff $b\in\{-a,2a,a/2\}$, so bad pairs in line coordinates correspond exactly to bad pairs in absolute coordinates. Writing $S_d$ for the members of $Z$ of difference $d$ and $P_d$ for the number of line-primitive bad pairs covered by $S_d$, the displayed disjointness gives $\sum_d P_d\le\killop(Z)$, while $|Z|=\sum_d|S_d|$. Hence the AP case of the required bound,
\[ |Z|-\killop(Z)\ \le\ \sum_d\bigl(|S_d|-P_d\bigr)\ \le\ \sum_d\varepsilon_d\ =\ \Big\lfloor\frac{\min(\lambda,\rho)}{2}\Big\rfloor, \]
follows, line by line, from the following self-contained statement, applied on line $d$ with $\Lambda=\lfloor\lambda/d\rfloor$, $\mathrm{P}=\lfloor\rho/d\rfloor$.

\begin{lemma}[Defect-one counting inequality]\label{lem:B}
Let $S$ be any family of integer intervals $[-l,r]\ni 0$ with $l+r\ge 3$, $0\le l\le\Lambda$, $0\le r\le\mathrm{P}$, and let $P(S)$ denote the number of pairs $\{a,b\}\subseteq[-\Lambda,\mathrm{P}]\setminus\{0\}$ with $\gcd(|a|,|b|)=1$ and $b\notin\{-a,2a,a/2\}$ that are contained in at least one member of $S$. Then
\[ |S|\ \le\ P(S)+\varepsilon,\qquad \varepsilon=\begin{cases}1,&\min(\Lambda,\mathrm{P})\ge 2,\\ 0,&\text{otherwise.}\end{cases} \]
\end{lemma}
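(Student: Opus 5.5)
We would prove Lemma~\ref{lem:B} by a charging argument: process the members of $S$ in a suitable order and assign to each member (except at most one) a primitive bad pair that it covers and that no earlier member covers. Since each such pair is counted once in $P(S)$, this gives $|S|\le P(S)+\varepsilon$. The natural order is by inclusion-compatible size: sort the intervals $[-l,r]$ by $l+r$ and then by $r$, and for each interval $I=[-l,r]$ look at its ``new'' pairs, namely those using an endpoint $-l$ or $r$ that is absent from all previously processed intervals strictly inside its region.

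The key step is the choice of witness pair. For an interval with $r\ge 2$ and $l\ge 1$ the pair $\{-1,r\}$ is primitive, and it is bad unless $r=1$; similarly $\{r-1,r\}$ is primitive, and it is bad unless $r=2$ (with $\{1,2\}$ good). For $l\ge 2$ the pairs $\{-l,1\}$ and $\{-l,-(l-1)\}$ play the same role. For one-sided intervals ($l=0$ and $r\ge3$) we use $\{r-1,r\}$ when $r\ge3$, or $\{-(l-1),-l\}$ on the left.

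The danger is that two intervals sharing the same extreme endpoint, say $[-l,r]$ and $[-l',r]$ with $l<l'$, both want a pair through $r$. The larger one must then pay with a pair through its left end, such as $\{-l',r\}$. This pair is primitive only when $\gcd(l',r)=1$, so we fall back to $\{-l',1\}$ or $\{-l',r-1\}$ (consecutive integers are coprime) and check that these are bad and not yet used. We would structure this as an injection from $S$ to pairs by a staircase argument. View $S$ as a set of lattice points $(l,r)$ in the box $[0,\Lambda]\times[0,\mathrm P]$ with $l+r\ge3$. Define the corner of $(l,r)$ as a primitive bad pair $\{-l,r\}$ or $\{-l,r\pm1\}$ or $\{-l\pm1,r\}$ chosen so that distinct points receive distinct pairs, each covered by the corresponding interval. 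Note that $\{-a,b\}$ with $a,b>0$ is bad unless $b\in\{a,2a,a/2\}$ fails to be adjacent, i.e.\ unless $b=a$, $b=2a$ or $a=2b$. The goodness condition is sparse, so the injection only has to be repaired along the three rays $r=l$, $r=2l$, $l=2r$.

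The main obstacle is exactly these exceptional rays, together with the small corner near $(1,1),(1,2),(2,1),(2,2)$ and $(2,4),(4,2)$. This is where the defect $\varepsilon=1$ appears: for instance, the family of all intervals inside $[-2,2]$ covers only the primitive bad pairs among $\{-2,-1,1,2\}$, and the count is off by exactly one. We would handle the rays by a local rerouting: a point on $r=l$ that cannot use $\{-l,l\}$ takes $\{-l,l-1\}$ or $\{-(l-1),l\}$, with tie-breaking by parity. We would then show that at most one point (in the $[-2,2]$ region) is left uncharged, which requires $\min(\Lambda,\mathrm P)\ge2$ for it to exist at all. If the explicit injection becomes too case-heavy, we would follow the paper's hint: verify the inequality exactly for all profiles with $\Lambda,\mathrm P\le 500$ by a dynamic program over staircase (down-set) profiles. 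This reduction is legitimate because adding all subintervals only increases $P(S)$ by at most the added count, so extremal $S$ may be taken to be down-closed. For large parameters we would bound $|S|-P(S)$ by a positive-definite quadratic form in the profile heights that dominates the linear error terms.
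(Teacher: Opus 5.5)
Your main route, a local injection sending each cell $(l,r)$ to a primitive bad pair near its corner, fails for a counting reason, not because it is case-heavy. A mixed pair $\{-a,b\}$ with $a,b\ge1$ is bad iff $a\ne b$, since $0$ would have to be the middle term. So your claim that the rays $b=2a$, $a=2b$ are exceptional for mixed pairs is wrong; those rays matter only for one-sided pairs. For example, $\{-1,2\}$ is bad, and it is one of the two witnesses in the $[-2,2]$ window. Moreover, a mixed pair $\{-a,b\}$ is covered iff the cell $(a,b)$ lies in the down-closure of $S$. Hence the covered primitive bad mixed pairs are exactly the coprime off-diagonal cells of the region.

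Every candidate you list ($\{-l,r\}$, $\{-l,r-1\}$, $\{-(l-1),r\}$, $\{-1,r\}$, $\{-l,1\}$) is therefore already the natural pair of some cell. The only additional supply in your scheme is the at most $\Lambda+\mathrm{P}-4$ consecutive one-sided pairs. But the cells with $\gcd(l,r)>1$ or $l=r$ have density $1-6/\pi^2\approx 0.39$; they are not confined to three rays and a small corner. Already for the full window $\Lambda=\mathrm{P}=4$ there are $19$ members and only $10+4=14$ pairs of your types, whereas the true value is $P(S)=18$. The four missing witnesses are $\{1,3\},\{1,4\},\{-1,-3\},\{-1,-4\}$. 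Separately, your fallback $\{-l',r-1\}$ need not be primitive: $r-1$ and $r$ being consecutive says nothing about $\gcd(l',r-1)$.

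The missing idea is that the non-coprime cells must be paid for by the \emph{whole} one-sided supply. Every right pair $\{b',b\}$ with $b\le\mathrm{P}$ is covered by the member $(0,b)$, giving $\Phi(\mathrm{P})-2$ witnesses regardless of the profile. The left pairs number $\Phi(L(0))-2$. This supply is quadratic and independent of where the deficient cells sit, so any charging is inherently global. That is why the paper uses an explicit injection only when $\min(\Lambda,\mathrm{P})\le1$.

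Your fallback is the paper's route, but as written it is a pointer rather than an argument. To carry it out you need three things:
\begin{itemize}
\item the column decomposition ($\{-a,b\}$ covered iff $a\le L(b)$, right supply unconditional, left supply determined by $L(0)$), which makes $\max(D-P)$ a dynamic program with per-column scores;
\item the reduction to $\Lambda=L(0)$ before the tail estimate, since otherwise the left supply is not $\Phi(\Lambda)$;
\item the tail comparison: the excess demand is essentially at most $\Lambda\sum_{r\le\mathrm{P}}(1-\varphi(r)/r)\approx(1-6/\pi^2)\Lambda\mathrm{P}$, while the one-sided supply is $\approx(3/\pi^2)(\Lambda^2+\mathrm{P}^2)$.
\end{itemize}
So the governing quadratic form lives in $(\Lambda,\mathrm{P})$, not in the profile heights. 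It is positive definite precisely because $12/\pi^2>1$, and the analytic range ($\max\ge250$) overlaps the computed range ($\le500$). Finally, the down-set reduction is simpler than you state: adding subintervals leaves $P(S)$ unchanged and only increases $|S|$.
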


Combining Proposition~\ref{prop:red}, Lemma~\ref{lem:B}, the disjointness of primitive witnesses, and \eqref{eq:epsum} establishes the required bound $|Z|-\killop(Z)\le\lfloor\min(\lambda,\rho)/2\rfloor$ when all members of $Z$ are APs. Section~\ref{sec:crooked} removes this restriction, completing the proof of Theorem~\ref{thm:upper}.

\section{Proof of Lemma~\ref{lem:B}}\label{sec:lemB}

Since coverage is monotone under taking subsets of members, the extremal $S$ is the full down-set of an antichain; equivalently $S$ is described by a nonincreasing \emph{staircase profile} $L(0)\ge L(1)\ge\cdots\ge L(\mathrm{P})\ge 0$ with $L(0)\le\Lambda$, the members being all cells $(l,r)$ with $0\le l\le L(r)$ and $l+r\ge 3$. The witnesses in Lemma~\ref{lem:B} need only be \emph{covered}, not matched into their own members, so the lemma is a pure counting inequality about staircase regions: writing $D$ for the number of demand cells and $P$ for the number of covered primitive bad pairs, we must show $D\le P+\varepsilon$.

\subsection{The case $\min(\Lambda,\mathrm{P})\le 1$}\label{sec:minsmall}
Here every member has $\min(l,r)\le 1$ and we exhibit an explicit injection into primitive bad pairs contained in the corresponding member:
\[ (l,1)\mapsto\{-l,1\}\ (l\ge2),\qquad (l,0)\mapsto\{-l,-1\}\ (l\ge3),\qquad (1,r)\mapsto\{-1,r\}\ (r\ge2),\qquad (0,r)\mapsto\{1,r\}\ (r\ge3). \]
If $\mathrm{P}\le1$ only the first two rules can apply, and if $\Lambda\le1$ only the last two; if both hold there is no member at all, since $l+r\ge3$ then fails. Each image is primitive, bad (e.g.\ $\{1,r\}$ is bad iff $r\ne 2$, which holds as $r\ge 3$), contained in its member, and within either applicable pair of rules the images have distinct sign patterns, so the map is injective. Hence $|S|\le P(S)$ and $\varepsilon=0$ suffices.

\subsection{Column decomposition and exact dynamic programming for $\Lambda,\mathrm{P}\le 500$}\label{subsec:dp}
The supply decomposes by columns. A \emph{mixed} pair $\{-a,b\}$ ($a,b\ge1$, $\gcd(a,b)=1$, $a\ne b$) is covered iff $a\le L(b)$; a \emph{right} pair $\{b',b\}$ ($1\le b'<b$, $\gcd=1$, $b\ne 2b'$) is covered iff $b\le\mathrm{P}$, and there are exactly $\varphi(b)$ of them for $b\ge3$ and none for $b\le 2$; a \emph{left} pair $\{-a,-a'\}$ ($1\le a'<a$, $\gcd=1$, $a\ne 2a'$) is covered iff $a\le L(0)$, since the cell $(a,0)$ is then a member, so the left pairs number $\sum_{a=3}^{L(0)}\varphi(a)$. Consequently $D-P$ is, up to the left-pair term determined by $L(0)$, a sum of per-column scores depending only on $(r,L(r))$, and
\[ \max_{\text{staircases}}\,(D-P) \]
is computable exactly by a dynamic program over the profile, with suffix maxima giving an $O(\Lambda\mathrm{P})$ algorithm.\label{sec:dp} Running this program over \emph{all} staircase profiles with $\Lambda,\mathrm{P}\le 500$ gives
\[ \max (D-P) \;=\; 1, \]
attained, e.g., at the $(2,2)$ window, i.e.\ the configuration of the construction in Section~\ref{sec:construction}. We emphasize why a single run certifies every window with $\Lambda,\mathrm{P}\le500$: the program computes the left-pair supply from the profile's \emph{attained} maximum $L(0)$ rather than from the grid bound $\Lambda$ (no such correction is needed on the right, where the supply is $\Phi(\mathrm{P})-2$ unconditionally, since $(0,b)$ is a member for every $3\le b\le\mathrm{P}$), so each profile is scored exactly as an instance of its own attained window; profiles with $L(0)<\Lambda$ are therefore handled correctly, and the reported maximum is the true maximum over all instances with parameters up to $500$. (This covers, exactly, a class of roughly $\binom{1000}{500}$ down-set families.)

\subsection{The tail $\max(\Lambda,\mathrm{P})>500$}\label{subsec:tail}
We first reduce to the case in which the left parameter is attained. On the right no reduction is needed: for every $3\le b\le\mathrm{P}$ the cell $(0,b)$ satisfies $l+r=b\ge3$ and is therefore a member, so all $\Phi(\mathrm{P})-2$ right pairs are covered unconditionally. On the left, given any family $S$ put $\Lambda'=L(0)\le\Lambda$; then $S$ is an instance of the $(\Lambda',\mathrm{P})$-problem with the same $D$ and $P$, and $\varepsilon(\Lambda',\mathrm{P})\le\varepsilon(\Lambda,\mathrm{P})$ since $\varepsilon$ is monotone in $\Lambda$. If $\max(\Lambda',\mathrm{P})\le500$ the claim follows from \S\ref{sec:dp}, and if $\min(\Lambda',\mathrm{P})\le1$ from \S\ref{sec:minsmall}; so we may, and do, assume that $\Lambda=L(0)$ --- the supply term $\Phi(\Lambda)-2$ below relies on this attainment --- and that $\min(\Lambda,\mathrm{P})\ge2$. Write $\Phi(x)=\sum_{n\le x}\varphi(n)$ and $C(l,r)=\#\{1\le a\le l:\gcd(a,r)=1,\ a\ne r\}$. We use four elementary estimates, valid for all arguments $\ge 1$:
\begin{itemize}
\item[(E1)] $C(l,r)\ \ge\ l\,\varphi(r)/r-2^{\omega(r)}-1$. \emph{Proof:} $\#\{a\le l:\gcd(a,r)=1\}=\sum_{d\mid \mathrm{rad}(r)}\mu(d)\lfloor l/d\rfloor$ and $|\lfloor l/d\rfloor-l/d|<1$, with $2^{\omega(r)}$ squarefree divisors; subtract $1$ for the possible exclusion $a=r$.
\item[(E2)] $\sum_{r\le \mathrm{P}}\varphi(r)/r\ \ge\ (6/\pi^2)\mathrm{P}-\ln \mathrm{P}-2$. \emph{Proof:} $\sum_{r\le \mathrm{P}}\varphi(r)/r=\sum_{d\le \mathrm{P}}\frac{\mu(d)}{d}\lfloor \mathrm{P}/d\rfloor\ge \mathrm{P}\sum_{d\le \mathrm{P}}\mu(d)/d^2-\sum_{d\le \mathrm{P}}1/d$, and $\sum_{d\le \mathrm{P}}\mu(d)/d^2\ge 6/\pi^2-1/\mathrm{P}$.
\item[(E3)] $\sum_{r\le \mathrm{P}}2^{\omega(r)}\ \le\ \mathrm{P}(\ln \mathrm{P}+1)$, since $2^{\omega(r)}\le d(r)$ and $\sum_{r\le \mathrm{P}}d(r)=\sum_{d\le \mathrm{P}}\lfloor \mathrm{P}/d\rfloor$.
\item[(E4)] $\Phi(x)\ \ge\ (3/\pi^2)x^2-\tfrac12 x\ln x-2x$. \emph{Proof:} $\Phi(x)=\tfrac12\sum_{d\le x}\mu(d)\lfloor x/d\rfloor(\lfloor x/d\rfloor+1)$ and $(t-1)t\le\lfloor t\rfloor(\lfloor t\rfloor+1)\le t(t+1)$ give $\mu(d)\lfloor t\rfloor(\lfloor t\rfloor+1)\ge \mu(d)t^2-t$ for $t=x/d$; sum and use $\sum_{d\le x}\mu(d)/d^2\ge 6/\pi^2-1/x$.
\end{itemize}
Bounding the demand by $D\le(\Lambda+1)+\sum_{r=1}^{\mathrm{P}}(L(r)+1)$ and the supply from below by the three pair types (mixed: $\sum_r C(L(r),r)$; right: $\Phi(\mathrm{P})-2$; left: $\Phi(\Lambda)-2$), we obtain
\[ D-P\ \le\ (\Lambda+1)+\sum_{r=1}^{\mathrm{P}}\bigl[L(r)+1-C(L(r),r)\bigr]-\Phi(\mathrm{P})-\Phi(\Lambda)+4 .\]
By (E1), and since $L(r)\le\Lambda$ and $1-\varphi(r)/r\ge0$, each column satisfies $L(r)+1-C(L(r),r)\le \Lambda(1-\varphi(r)/r)+2^{\omega(r)}+2$; summing over $r\le\mathrm{P}$ and inserting (E2)--(E4) gives
\begin{equation}\label{eq:G}
 D-P\ \le\ G(\Lambda,\mathrm{P})\ :=\ -q(\Lambda,\mathrm{P})+\Lambda\ln \mathrm{P}+\tfrac32\,\mathrm{P}\ln \mathrm{P}+\tfrac12\,\Lambda\ln\Lambda+5(\Lambda+\mathrm{P})+6,
\end{equation}
where $q(\Lambda,\mathrm{P})=(3/\pi^2)(\Lambda^2+\mathrm{P}^2)-(1-6/\pi^2)\Lambda \mathrm{P}$. Since $\Lambda \mathrm{P}\le(\Lambda^2+\mathrm{P}^2)/2$,
\[ q\ \ge\ \Big(\tfrac{6}{\pi^2}-\tfrac12\Big)(\Lambda^2+\mathrm{P}^2)\ \ge\ 0.107\,(\Lambda^2+\mathrm{P}^2)\ \ge\ 0.107\,M^2, \qquad M:=\max(\Lambda,\mathrm{P}), \]
while the remaining terms of \eqref{eq:G} are at most $3M\ln M+10M+6$. The single-variable function $f(M)=-0.107M^2+3M\ln M+10M+6$ satisfies $f'(M)=-0.214M+3\ln M+13<0$ for $M\ge 135$ (indeed $f'(135)<-1$), and $f(250)<-40$; hence $f(M)<0$ for all $M\ge 250$. Hence $D-P\le G<0\le\varepsilon$ whenever $\max(\Lambda,\mathrm{P})\ge 250$, and the range $\max(\Lambda,\mathrm{P})\le 500$ is covered exactly by \S\ref{subsec:dp}. This completes the proof of Lemma~\ref{lem:B}. \qed

\medskip
All four estimates (E1)--(E4), and the assembled bound \eqref{eq:G}, were additionally verified numerically over large finite ranges as a safeguard.

\section{Crooked members: completion of the proof of Theorem~\ref{thm:upper}}\label{sec:crooked}

Throughout this section a \emph{member} is a set $z\ni 0$ with $|z|\ge 4$, $z\subseteq[-\lambda,\rho]$, and $Z$ is a family of distinct members with pairwise intersections APs (each intersection contains $0$, hence is an AP through $0$). Call $z$ \emph{crooked} if it is not an arithmetic progression. Recall that a pair $\{u,v\}\subseteq z\setminus\{0\}$ is \emph{bad} if $\{0,u,v\}$ is not an AP.

\begin{lemma}[Spanning lemma]\label{lem:span}
Suppose distinct members $z,z'$ both contain a bad pair $\{u,v\}$. Then $z\cap z'$ is an AP through $0$ whose difference $\delta$ divides $\gcd(|u|,|v|)$, and both $z$ and $z'$ contain every multiple of $\delta$ in $[\min(0,u,v),\max(0,u,v)]$.
\end{lemma}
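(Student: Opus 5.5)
The argument is direct and uses only the hypothesis that $z\cap z'$ is an AP. Set $I=z\cap z'$. Since both members contain $0$ (the standing convention of this section) and both contain $u$ and $v$, we have $\{0,u,v\}\subseteq I$. By hypothesis $I$ is an arithmetic progression, say $I=\{a,a+\delta,\dots,a+(k-1)\delta\}$ with $\delta>0$. It has at least three elements, so $\delta$ is well defined and $I$ is nondegenerate.

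\textbf{Step 1: $\delta$ divides $\gcd(|u|,|v|)$.} Every element of $I$ is congruent to $a$ modulo $\delta$. Since $0\in I$, we get $a\equiv 0\pmod\delta$, so every element of $I$ is a multiple of $\delta$. In particular $\delta\mid u$ and $\delta\mid v$, hence $\delta\mid\gcd(|u|,|v|)$. The badness of $\{u,v\}$ is not needed for this step. It only guarantees that we are in the interesting situation: $\{0,u,v\}$ is itself not an AP, so $I$ must strictly contain it, which is consistent with what follows.

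\textbf{Step 2: the full span lies in both members.} Because $I$ is an AP with difference $\delta$ containing $0$, it equals $\delta\cdot[-l,r]$ for some integers $l,r\ge0$. In other words, $I$ contains \emph{every} multiple of $\delta$ between $\min I$ and $\max I$. Now $\min I\le\min(0,u,v)$ and $\max I\ge\max(0,u,v)$, since $0,u,v\in I$. Hence every multiple of $\delta$ in $[\min(0,u,v),\max(0,u,v)]$ lies in $I$, and therefore lies in both $z$ and $z'$.

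The only point needing care is that ``AP'' here means a progression with no gaps, so that an AP with difference $\delta$ through $0$ is exactly the set of multiples of $\delta$ in an interval. This is the definition fixed in the introduction, so Step 2 is immediate. I do not expect a real obstacle. The lemma is essentially a rephrasing of the AP-intersection condition, and its usefulness presumably comes later, when the bad pair forces a long AP segment inside a crooked member.
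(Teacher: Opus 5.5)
Your proof is correct and follows the paper's own argument: $z\cap z'$ is an AP containing $0,u,v$, so its difference $\delta$ divides $u$ and $v$, and as a gap-free progression through $0$ it contains every multiple of $\delta$ in $[\min(0,u,v),\max(0,u,v)]$, which then lies in both members. You also correctly observe that badness of $\{u,v\}$ is not used; the only role of $u\neq v$ and $u,v\neq 0$ is to make $|z\cap z'|\ge 3$, so that $\delta$ is well defined.
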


\begin{proof}
$z\cap z'$ is an AP containing $0,u,v$; its difference $\delta$ divides $u$ and $v$, and an AP containing $0,u,v$ contains every multiple of $\delta$ between its least and greatest elements. Both members contain $z\cap z'$.
\end{proof}

Accordingly, say a bad pair $\{u,v\}\subseteq z$ is \emph{spanned in $z$} if there exists $\delta\mid\gcd(|u|,|v|)$ such that $z$ contains every multiple of $\delta$ in $[\min(0,u,v),\max(0,u,v)]$; otherwise the pair is \emph{private to $z$}. By Lemma~\ref{lem:span}, a pair private to $z$ is contained in no other member of $Z$.

\begin{theorem}[Private-pair theorem]\label{thm:private}
Every crooked member contains a private bad pair.
\end{theorem}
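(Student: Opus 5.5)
The plan is to exploit the fact that a spanned pair forces $z$ to contain a long, gap-free progression, and to choose the candidate private pair so that any spanning progression would be larger than a maximal one, which is a contradiction. First I normalise. Being private is invariant under scaling all coordinates by a common factor: $\delta\mid\gcd(|u|,|v|)$ and ``contains all multiples of $\delta$ in the hull'' both scale. So I divide $z$ by $g=\gcd(z\setminus\{0\})$ and assume $\gcd(z)=1$. Using the reflection $x\mapsto -x$, I also assume $b=\max z\ge|a|$, where $a=\min z\le 0$. Two further facts from Lemma~\ref{lem:tri} and the definition of $R$ will be used throughout. A pair $\{u,v\}$ of opposite signs is bad unless $v=-u$. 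A pair of the same sign is bad unless one element is twice the other.

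Next I introduce a maximal spanned block. Consider all pairs $(\delta,I)$ such that $I$ is an interval containing $0$ and $z\supseteq\delta\Z\cap I$, where $\delta\Z\cap I$ has at least three elements. Among these, choose one, $Q=\delta\Z\cap[\alpha,\beta]$, with $[\alpha,\beta]$ inclusion-maximal, and among those with the smallest $\delta$. If no such $Q$ exists, every bad pair is private, since any spanning set is itself such a $Q$ (a hull containing $0,u,v$ with $u\ne v$ gives at least three multiples). Lemma~\ref{lem:tri} then supplies at least one bad pair, because $|z|\ge4$. Since $z$ is crooked, $z\ne Q$, so there is some $w\in z\setminus Q$. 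Either $w$ lies outside $[\alpha,\beta]$, or $w\in[\alpha,\beta]$ with $\delta\nmid w$.

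The key step is to pick $w$ and a partner $x$ so that $\{w,x\}$ is bad and any spanning $\delta'$ contradicts the maximality of $Q$. I take $w$ extremal in $z\setminus Q$, namely furthest from $0$ and on the side of $b$ if possible. I take $x$ to be the endpoint of $Q$ on the opposite side of $0$ from $w$, or the far endpoint on the same side if $Q$ is one-sided. Suppose $\delta'$ spans $\{w,x\}$. Then $z\supseteq\delta'\Z\cap\mathrm{hull}(0,w,x)$, and this hull strictly contains one of $[\alpha,0]$ or $[0,\beta]$ on $w$'s side. I then compare $\delta'$ with $\delta$. If $\delta'\mid\delta$, merging the two blocks on overlapping ranges gives a spanned block with a larger hull, or equal hull and smaller difference, contradicting the choice of $Q$. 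If $\delta'\nmid\delta$, then $z$ contains both $\delta\Z$ and $\delta'\Z$ on a common range of length at least $\min(|\alpha|,\beta)$, hence multiples of $\gcd(\delta,\delta')<\delta$ near $0$ by a Frobenius/B\'ezout-type argument. This again should yield a block with smaller difference, contradicting minimality.

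I expect the main obstacle to be the degenerate configurations: where the chosen pair fails to be bad ($w=-x$, or $w=2x$, or $x=2w$), and where $Q$ is short, so that the Frobenius argument does not apply because the ranges are too small. For these I would fall back on alternative partners. These are the other endpoint of $Q$, the element $\pm\delta$ adjacent to $0$ (which lies in $Q$ since $Q$ has at least three terms), or a second element of $z\setminus Q$; since $|z|\ge4$, at least one of these choices is bad. I would finish with a finite case check on small block lengths, as in \S\ref{sec:minsmall}.
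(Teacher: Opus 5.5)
There is a genuine gap in your key step. It comes from choosing $Q$ by maximising the interval first and the step only second.

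\textbf{Where the argument fails.} Your choice of $Q$ gives no control over an element $w\in z\setminus Q$ lying \emph{inside} $[\alpha,\beta]$ with $\delta\nmid w$. For such $w$ we have $\mathrm{hull}(0,w,x)\subseteq[\alpha,\beta]$, so inclusion-maximality is not contradicted. Your other branch is simply false: $z$ is not closed under addition, so containing $\delta\Z\cap I$ and $\delta'\Z\cap I$ says nothing about multiples of $\gcd(\delta,\delta')$.

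\textbf{A concrete counterexample.} Take $z=\{0,\pm2,\pm3,\pm4,\pm6\}$, which already has $\gcd$ $1$. Read $[\alpha,\beta]$ as the hull of $Q$, as your use of its endpoints indicates.
\begin{itemize}
\item Your rule gives $[\alpha,\beta]=[-6,6]$ with $\delta=2$. Then $z\setminus Q=\{\pm3\}$, $w=3$ and $x=-6$.
\item The pair $\{3,-6\}$ is bad, but it is spanned by $\delta'=3$, since $\{-6,-3,0,3\}\subseteq z$.
\item Its hull $[-6,3]$ does not contain $[0,6]$, so your claim that the hull strictly enlarges $w$'s side fails.
\item Also $\delta'\nmid\delta$ and $\gcd(2,3)=1$, yet $\pm1\notin z$, so the ``Frobenius/B\'ezout'' branch produces nothing.
\end{itemize}
The theorem does hold here: $\{2,3\}$ is bad with $\gcd$ $1$ and $1\notin z$, so it is private. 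Your fallback of ``alternative partners'' plus ``a finite case check on small block lengths'' does not close the gap. You give no reason why any listed partner must be private. A short block also does not bound the rest of $z$, so the residual configurations are not finite.

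\textbf{The missing idea: minimise the step globally first.} The paper assumes every bad pair is spanned and lets $\delta_0$ be the least step spanning \emph{any} bad pair of $z$.
\begin{itemize}
\item Take $w\in z$ with $\delta_0\nmid w$. Pair $w$ with whichever of $\pm\delta_0$ lies in the spanning block; if $w=\pm\delta_0/2$, use whichever of $\pm2\delta_0$ lies there instead. This gives a bad pair whose $\gcd$ is a proper divisor of $\delta_0$. It could only be spanned at a smaller step, which contradicts minimality. Hence $z\subseteq\delta_0\Z$.
\item After rescaling to $\delta_0=1$, one of $\pm1$ lies in $z$. Every bad pair $\{\pm1,w\}$ has $\gcd$ $1$, so it can only be spanned with step $1$. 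That forces the whole integer interval between $w$ and $\pm1$ into $z$, making $z$ an interval.
\end{itemize}
Your normalisation $\gcd(z\setminus\{0\})=1$ does not give $\pm1\in z$, as the example shows; minimality of $\delta_0$ does. The partner $\pm\delta$ that you list only as a fallback is exactly the right one. It works only when $\delta$ is globally minimal, and your interval-first choice of $Q$ does not guarantee that.
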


\begin{proof}
Since $|z\setminus\{0\}|\ge 3$ and the relation $R$ is triangle-free (Lemma~\ref{lem:tri}), $z$ contains a bad pair. Assume for contradiction that every bad pair of $z$ is spanned in $z$; we show $z$ is an AP.

Let $\delta_0$ be the least positive integer that spans some bad pair of $z$, say $p_0=\{\delta_0 a,\delta_0 b\}$ with $\gcd$-condition $\delta_0\mid\gcd$, and let $P_0\subseteq z$ be the corresponding progression: all multiples of $\delta_0$ in $I_0=[\min(0,\delta_0a,\delta_0b),\max(0,\delta_0a,\delta_0b)]$. Since $\{a,b\}$ is not an $R$-pair we cannot have $|a|=|b|=1$, so $\max(|a|,|b|)\ge2$; hence $P_0$ contains $2t\delta_0$ for some sign $t\in\{+,-\}$, and $P_0$ contains $s\delta_0$ for some sign $s$.

\emph{Step 1: $z\subseteq\delta_0\mathbb{Z}$.} Let $w\in z$ with $\delta_0\nmid w$. The $R$-partners of $s\delta_0$ are $-s\delta_0$, $2s\delta_0$ and $s\delta_0/2$; the first two are multiples of $\delta_0$. If $w\neq s\delta_0/2$, the pair $\{s\delta_0,w\}$ is therefore bad, and $g:=\gcd(\delta_0,|w|)$ is a proper divisor of $\delta_0$; by assumption this pair is spanned at some $\delta\mid g<\delta_0$, contradicting the minimality of $\delta_0$. If $w=s\delta_0/2$, consider instead the pair $\{s\delta_0/2,\,2t\delta_0\}$: the $R$-partners of $s\delta_0/2$ are $-s\delta_0/2$, $s\delta_0$ and $s\delta_0/4$, none of which equals $\pm2\delta_0$, so the pair is bad, with $\gcd$ equal to $\delta_0/2<\delta_0$ --- the same contradiction. Hence $z\subseteq\delta_0\mathbb{Z}$.

\emph{Step 2: $z$ is an AP.} Badness, $R$-edges and spanning are invariant under $x\mapsto x/\delta_0$ on $\delta_0\mathbb{Z}$, so we may assume $\delta_0=1$; then $P_0$ is an integer interval around $0$ of length at least $3$, so $1\in z$ or $-1\in z$.

Suppose first $1\in z$. For $w\in z$ with $w\ge3$: the pair $\{1,w\}$ is bad ($w\notin\{-1,2,\tfrac12\}$) with $\gcd$ equal to $1$, so its only possible spanning step is $\delta=1$, forcing $[0,w]\cap\Z\subseteq z$. For $w\in z$ with $w\le-2$: likewise $\{1,w\}$ is bad and $[w,1]\cap\Z\subseteq z$. The remaining elements $w\in\{-1,2\}$ impose nothing, but $[-1,0]$ and $[0,2]$ lie in $z$ automatically whenever those elements are present. Hence $z=[\min z,\max z]\cap\Z$, an AP.

If instead only $-1\in z$: for any $w\in z$ with $w\ge 2$ the pair $\{-1,w\}$ is bad ($w\notin\{1,-2,-\tfrac12\}$) with $\gcd$ equal to $1$, forcing $[-1,w]\cap\Z\subseteq z$ and in particular $1\in z$, returning us to the previous case; if $\max z\le 1$ then either $1\in z$ (previous case) or $\max z=0$, and the mirror argument with pairs $\{-1,w\}$, $w\le-3$, gives $z=[\min z,0]\cap\Z$. In every case $z$ is an AP, contradicting crookedness.
\end{proof}

We verified Theorem~\ref{thm:private} by brute force over all crooked members contained in the windows $[-6,6]$, $[-4,8]$, $[-3,9]$, $[-2,10]$ and $[-7,7]$ (about $26{,}000$ members): none lacks a private bad pair.

\begin{theorem}[The crooked-member bound]\label{thm:lemA}
For every family $Z$ as above, $|Z|-\killop(Z)\le\lfloor\min(\lambda,\rho)/2\rfloor$.
\end{theorem}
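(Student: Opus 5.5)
The plan is to split $Z=Z_{\mathrm{AP}}\cup Z_{\mathrm{cr}}$ into AP members and crooked members, and to charge each crooked member to its own private bad pair. The AP part is already handled by Lemma~\ref{lem:B}, which is what produces the term $\lfloor\min(\lambda,\rho)/2\rfloor$.

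First, by Theorem~\ref{thm:private}, choose for each crooked $z\in Z_{\mathrm{cr}}$ a bad pair $\pi(z)\subseteq z$ that is private to $z$. By Lemma~\ref{lem:span}, a private pair of $z$ lies in no other member of $Z$. Hence the map $z\mapsto\pi(z)$ is injective. Moreover, its image consists of killed bad pairs that are contained in no AP member of $Z$ and in no other crooked member. This gives the lower bound
\[ \killop(Z)\ \ge\ \killop(Z_{\mathrm{AP}})+|Z_{\mathrm{cr}}|, \]
because the pairs $\pi(z)$ are not counted in $\killop(Z_{\mathrm{AP}})$: a pair in $\pi(Z_{\mathrm{cr}})$ cannot also lie in an AP member.

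Next, I apply the AP case of the bound to $Z_{\mathrm{AP}}$. The family $Z_{\mathrm{AP}}$ is a family of distinct AP members through $0$ inside $[-\lambda,\rho]$. The argument of Section~\ref{sec:reduction} applies to it verbatim: split it by common difference $d$, count only line-primitive bad pairs, use their disjointness across lines, apply Lemma~\ref{lem:B} on each line, and sum via \eqref{eq:epsum}. That argument never used pairwise intersection properties, only the membership sets themselves. It yields $|Z_{\mathrm{AP}}|-\killop(Z_{\mathrm{AP}})\le\lfloor\min(\lambda,\rho)/2\rfloor$. Combining the two bounds gives
\[ |Z|-\killop(Z)\le |Z_{\mathrm{AP}}|+|Z_{\mathrm{cr}}|-\killop(Z_{\mathrm{AP}})-|Z_{\mathrm{cr}}|\le\Big\lfloor\frac{\min(\lambda,\rho)}{2}\Big\rfloor. \]

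I expect the main point needing care to be the claim that a private pair of $z$ is not covered by any AP member. Lemma~\ref{lem:span} gives this only for members of $Z$, and it needs the hypothesis that distinct members have AP intersection. That hypothesis holds here because $Z$ is a family with pairwise AP intersections, and AP members are among the members of $Z$. So the contrapositive of Lemma~\ref{lem:span} applies uniformly: if $\pi(z)\subseteq z'$ for some $z'\ne z$ in $Z$, then $\pi(z)$ is spanned in $z$, contradicting privacy. I will also check that a line-$d$ member with $l+r\ge3$ is counted in Lemma~\ref{lem:B} exactly as in the original AP argument, so that no additional $\varepsilon$ terms appear when $Z_{\mathrm{AP}}$ is only a subfamily.
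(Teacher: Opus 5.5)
Your proposal is correct and follows essentially the same route as the paper. Crooked members are charged injectively to the private bad pairs supplied by Theorem~\ref{thm:private}, and Lemma~\ref{lem:span} keeps these pairs out of every other member, including every AP member; the AP members are then handled line by line via Lemma~\ref{lem:B} and \eqref{eq:epsum}. The only difference is bookkeeping: you route the AP contribution through $\killop(Z_{\mathrm{AP}})$ and the already-established AP case, whereas the paper writes the bound directly in terms of $\sum_d P_d$, the disjoint line-primitive pools, which is the same inequality unpacked.
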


\begin{proof}
Split $Z$ into the AP members, grouped by difference $d$ into families $S_d$, and the crooked members $C$. By Lemma~\ref{lem:B} applied on line $d$ (Section~\ref{sec:reduction}), $|S_d|\le P_d+\varepsilon_d$, where $P_d$ counts the line-$d$-primitive bad pairs covered by $S_d$; these pairs are killed, and the pools for distinct $d$ are disjoint. By Theorem~\ref{thm:private} each $z\in C$ contains a private bad pair $w(z)$; by Lemma~\ref{lem:span} the pair $w(z)$ lies in no other member of $Z$ --- in particular the pairs $w(z)$ are pairwise distinct, and none of them is covered by any AP member, so they are disjoint from all the pools above. Hence
\[ \killop(Z)\ \ge\ \sum_d P_d+|C|,\qquad\text{while}\qquad |Z|=\sum_d|S_d|+|C|\ \le\ \sum_d P_d+\sum_d\varepsilon_d+|C|, \]
and $\sum_d\varepsilon_d=\lfloor\min(\lambda,\rho)/2\rfloor$ by \eqref{eq:epsum}.
\end{proof}

\begin{proof}[Proof of Theorem~\ref{thm:upper}]
Let $F$ be starred through $m$. By Proposition~\ref{prop:red} and Theorem~\ref{thm:lemA}, $|F|\le N+\binom{N-1}2+\lfloor\min(m-1,N-m)/2\rfloor$, and the maximum of the last term over $m$ is $\lfloor(N-1)/4\rfloor$.
\end{proof}

\begin{corollary}\label{cor:kernel}
Szab\'o's kernel conjecture implies Conjecture~\ref{conj:main}: if for every $N$ some maximum family is starred, then $t(N)=\binom N2+1+\lfloor(N-1)/4\rfloor$ for all $N$.
\end{corollary}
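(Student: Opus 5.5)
The plan is to derive this immediately from Theorem~\ref{thm:upper} combined with Theorem~\ref{thm:lower}. Fix $N\ge1$ and assume, as in the hypothesis of Corollary~\ref{cor:kernel}, that some family $F$ of maximum size $t(N)$ is starred, that is, all its members contain a common element. First I will observe that $F$ is then a starred family of distinct subsets of $[N]$ whose pairwise intersections are nonempty APs, so it satisfies exactly the hypotheses of Theorem~\ref{thm:upper}. That theorem gives
\[ t(N)=|F|\le \binom N2+1+\Big\lfloor\frac{N-1}{4}\Big\rfloor. \]
Next I will invoke Theorem~\ref{thm:lower} (the construction of Section~\ref{sec:construction}), which supplies the matching lower bound $t(N)\ge\binom N2+1+\lfloor (N-1)/4\rfloor$. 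Together the two inequalities give equality for this $N$; since $N$ was arbitrary, this yields the equality asserted in Conjecture~\ref{conj:main} for every $N$.

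The only point that needs some care is how the hypothesis is phrased. The kernel conjecture, as stated in the introduction, says that \emph{every} extremal family has a common element. This is at least as strong as requiring that \emph{some} maximum family be starred, and the weaker version is all the argument above uses, so either reading of the hypothesis is enough.

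There is no real obstacle at this stage. All the substantive work sits in Theorem~\ref{thm:upper}, namely in Lemma~\ref{lem:B} together with the private-pair Theorem~\ref{thm:private}. I would also remark that for $N\le 12$ the hypothesis is not needed at all, because Theorem~\ref{thm:exact} confirms those values unconditionally.
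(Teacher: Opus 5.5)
Your proposal is correct and is exactly the paper's argument. The paper states the corollary as an immediate consequence of applying Theorem~\ref{thm:upper} to a starred maximum family, which gives $t(N)\le\binom N2+1+\lfloor(N-1)/4\rfloor$, and combining this with the matching lower bound of Theorem~\ref{thm:lower}.
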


\section{Computations}\label{sec:comp}

\textbf{Exact values (Theorem~\ref{thm:exact}).} For $N\le 10$, exact maximum clique on the $(2^N-1)$-vertex graph via bit-parallel branch and bound with greedy-colouring bounds; independently cross-checked for $N\le 6$; every extremal family re-verified pair by pair. For $N=11,12$ we ran a \emph{decision} search for a clique of size $59$ (resp.\ $70$): any clique has a minimum vertex in a fixed order, giving independent subproblems; we use ascending-degree order, iterated core peeling (a $(T{+}1)$-clique needs internal degree $\ge T$), and, for $N=12$, the following rigorous \emph{star pruning}. First, exhaustive search over each possible common element $m$ (with reflection symmetry) established that every \emph{starred} family in $[12]$ has size at most $69$. Consequently, during the search for a $70$-clique, any branch in which the bitwise AND of the current members and all remaining candidates is nonzero can be pruned, because every completion of that branch is starred. The searches terminated with no clique of size $59$ (resp.\ $70$), so $t(11)=58$ and $t(12)=69$ unconditionally.

\textbf{Starred and structured regimes.} Exhaustive starred computations give the conjectured value for $N\le13$ for every choice of the common element. Within the ``ansatz'' class (members through $m$ that are APs or non-AP triples), exact optimization by clique search ($N\le25$) and integer programming ($N\le61$, all central $m$, plus non-central spot checks) always returns $\binom N2+1+\lfloor (N-1)/4\rfloor$, and in every tested case the line-selection optimum equals $\lfloor\min(m-1,N-m)/2\rfloor$.

\textbf{Tests of Theorem~\ref{thm:lemA}.} As an independent check, the crooked-member bound was verified exactly by integer programming on the windows $(\lambda,\rho)\in\{(2,4),(2,5),(3,3),(3,4),(4,4)\}$, in each case with maximum exactly $\lfloor\min(\lambda,\rho)/2\rfloor$; and Theorem~\ref{thm:private} was verified by brute force over roughly $26{,}000$ crooked members as reported in Section~\ref{sec:crooked}.

All code (C solvers, the dynamic program of \S\ref{sec:dp}, ILP models, verification scripts) is available at \url{https://github.com/peter-rich/erdos272}; the dynamic program documents in source how the left- and right-pair supply is computed from attained profile maxima.

\section{Open problems}\label{sec:open}

By Corollary~\ref{cor:kernel}, Conjecture~\ref{conj:main} now rests on a single statement.

\begin{problem}[Szab\'o's kernel conjecture \cite{Sz99,EP272}]\label{prob:kernel}
Show that for every $N$, some (equivalently, by our computations for $N\le 12$, every maximum) extremal family has a common element.
\end{problem}

A model may be the uniqueness analysis of Graham, Simonovits and S\'os \cite{GSS80} in the empty-intersection-allowed setting, or the machinery of \cite{SiSo81,Sz99}: in particular \cite[Theorem~4]{SiSo81} already bounds well-intersecting families of bounded-size non-progression members with empty total intersection, and the $\delta$-triplet techniques of \cite{Sz99} quantify how families deviating from a common centre pay in determining triples. Sharpening those $O(N^{5/3})$-type losses to exact losses is precisely what Problem~\ref{prob:kernel} requires. We record one instructive caveat encountered en route to Theorem~\ref{thm:lemA}.

\subsection*{Partial results towards Problem~\ref{prob:kernel}}

Write $B(N)=\binom N2+1+\lfloor(N-1)/4\rfloor$. A putative counterexample to Conjecture~\ref{conj:main} is a family $F$ with $|F|>B(N)$; by Theorem~\ref{thm:upper} such a family is non-starred. We prove several unconditional structural results about such families, culminating in Theorem~\ref{thm:tristar}: in a putative counterexample whose members of size at least $4$ are progressions, all triples pass through a common element.

\begin{lemma}[Global private triple]\label{lem:gpt}
Let $F$ be any well-intersecting family and let $z\in F$ with $|z|\ge4$ not an arithmetic progression. Then $z$ contains a triple lying in no other member of $F$.
\end{lemma}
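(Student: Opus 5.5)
The plan is to reduce Lemma~\ref{lem:gpt} to a ``private triple'' analogue of Theorem~\ref{thm:private}, now without a distinguished centre. \emph{Reduction.} Suppose a triple $T\subseteq z$ is not an AP and lies in some other member $z'\in F$. Then $z\cap z'$ is an AP containing $T$ and contained in $z$. Call a non-AP triple $T\subseteq z$ \emph{spanned in $z$} if some AP $Q$ with $T\subseteq Q\subseteq z$ exists. Such a $Q$ then contains every term of its difference $\delta$ between $\min T$ and $\max T$, and $\delta$ divides all differences of $T$. A non-spanned non-AP triple of $z$ is therefore in no other member, exactly as in Lemma~\ref{lem:span}. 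It suffices to show: \emph{if every non-AP triple of $z$ is spanned in $z$, then $z$ is an AP.} Non-AP triples exist, because a $4$-set all of whose triples are APs is itself an AP. That check is small: $\{0,1,2,x\}$ forces $x=3$ or $x=-1$ up to the obvious normalisation.

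\emph{Step 1 (minimal step and lattice).} Among all spanning APs of all non-AP triples of $z$, choose one, $Q_0$, of least difference $\delta_0$, spanning the triple $T_0$. Since $T_0$ is not an AP, $Q_0$ has at least $4$ consecutive terms of step $\delta_0$; write $c$ for a term of $Q_0$ that is not its endpoint. I will show $z\subseteq c+\delta_0\Z$, mirroring Step~1 of Theorem~\ref{thm:private}. Take $w\in z$ with $w\not\equiv c\pmod{\delta_0}$. Among the four or more consecutive terms $c_1<c_2<c_3<c_4$ of $Q_0$, choose two, $u$ and $v$, so that $\{u,v,w\}$ is not an AP. Each of the few pairs $(u,v)$ rules out at most three positions of $w$ (the midpoint and the two reflections). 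One must check that, among the pairs $(c_1,c_2)$, $(c_1,c_3)$, $(c_2,c_4)$ and so on, some pair avoids every such coincidence. This is the analogue of the $w=s\delta_0/2$ case, and choosing $u,v$ at distance $2\delta_0$ or $3\delta_0$ handles the half-step positions. The triple $\{u,v,w\}$ is then spanned by an AP whose difference divides $\gcd(v-u,\,w-u)$. This difference divides $\delta_0\cdot(\text{small})$ but is not a multiple of $\delta_0$, so it is strictly less than $\delta_0$ after possibly replacing it by its gcd with $\delta_0$. To make that last point honest, I will pick $u,v$ consecutive ($v-u=\delta_0$) whenever possible, so the difference divides $\delta_0$ and is proper, contradicting minimality. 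The consecutive choice fails only when $w$ is a midpoint or reflection of that pair, and then another consecutive pair of $Q_0$ works. This case analysis is the step I expect to be the main obstacle, and I would settle it first by listing the $O(1)$ exceptional positions explicitly.

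\emph{Step 2 (interval).} Rescale by $x\mapsto (x-c)/\delta_0$, which preserves AP-ness and spanning, so that $\delta_0=1$ and $z\subseteq\Z$ contains a run of at least four consecutive integers. Let $a=\min z$ and $b=\max z$. For any $y\in z$ that is not in the run, pair $y$ with two consecutive run elements $u,u+1$ chosen so that $\{u,u+1,y\}$ is not an AP; this is possible since at most two positions are excluded per pair. The spanning AP has difference dividing $1$, so it contains every integer between $y$ and the run. Hence $z\supseteq[a,b]\cap\Z$, so $z=[a,b]\cap\Z$ is an AP, contradicting crookedness. Thus $z$ has a non-spanned non-AP triple, which by the reduction lies in no other member of $F$. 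As a sanity check, I would also rerun the brute-force verification used for Theorem~\ref{thm:private} on the centre-free windows.
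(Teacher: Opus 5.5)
Your proof is correct, but it follows a genuinely different and much longer route than the paper. You share the paper's basic reduction: a triple $T\subseteq z$ lying in another member $z'$ sits inside the AP $z\cap z'\subseteq z$, so it is spanned in $z$. You then argue by contradiction, transplanting the minimal-step, residue-class and interval-filling architecture of Theorem~\ref{thm:private} to the centre-free setting.

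The paper instead names the private triple outright. Writing $z=\{a_1<\dots<a_m\}$, some consecutive triple $T=\{a_{i-1},a_i,a_{i+1}\}$ has unequal gaps, since otherwise $z$ is an AP. Any AP containing $T$ has difference dividing both gaps, hence smaller than the larger gap. So it has a term strictly between $a_{i-1}$ and $a_{i+1}$ other than $a_i$, which is a point $z$ lacks. Thus a consecutive unequal-gap triple is never spanned in $z$, with no extremal choice and no case analysis.

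The paper's route buys brevity and an explicit description of private triples: every consecutive triple with unequal gaps works. Yours shows the lemma fits the same template as Theorem~\ref{thm:private}. The price is the midpoint bookkeeping in Step~1, which is exactly what choosing a \emph{consecutive} triple of $z$ sidesteps.

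There are two small slips to fix.

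First, $\{0,1,2,x\}$ does not force $x\in\{3,-1\}$: neither $\{0,1,3\}$ nor $\{-1,0,2\}$ is an AP. In fact no $4$-set of integers has all its triples APs. The existence of a non-AP triple is immediate anyway from a consecutive unequal-gap triple.

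Second, in Step~1 the reflection positions $u-\delta_0$ and $u+2\delta_0$ cannot occur, because they are $\equiv u\pmod{\delta_0}$. Only the midpoint $u+\delta_0/2$ needs the switch to the adjacent consecutive pair of $Q_0$. For whichever consecutive pair $(u,v)$ you end up using, the spanning difference divides $\gcd(v-u,w-u)=\gcd(\delta_0,w-u)$, a proper divisor of $\delta_0$. Your intermediate remark that ``not a multiple of $\delta_0$'' alone gives ``less than $\delta_0$'' is not valid, but the consecutive choice you then make repairs it.
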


\begin{proof}
Write $z=\{a_1<\dots<a_m\}$. If every three consecutive elements were in arithmetic progression, all gaps would be equal and $z$ would be an AP; so some consecutive triple $T=\{a_{i-1},a_i,a_{i+1}\}$ has unequal gaps. The shortest AP $P(T)$ containing $T$ has difference dividing both gaps, hence strictly smaller than the larger gap, so $P(T)$ contains a point of the open interval $(a_{i-1},a_{i+1})$ other than $a_i$; since $z$ has no such point, $P(T)\not\subseteq z$. If another member $z'$ contained $T$, then $z\cap z'$ would be an AP containing $T$, hence would contain $P(T)$, forcing $P(T)\subseteq z$ --- a contradiction.
\end{proof}

\begin{lemma}[Triples are intersecting; Hilton--Milner dichotomy]\label{lem:HM}
The $3$-element members of any well-intersecting family form an intersecting $3$-uniform family. Consequently, for $N\ge7$, by the Hilton--Milner theorem \cite{HM67} either all $3$-element members contain a common element, or there are at most $3N-8$ of them.
\end{lemma}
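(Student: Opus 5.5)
The first assertion follows straight from the definition of a well-intersecting family. Let $T,T'$ be two distinct $3$-element members. Their intersection $T\cap T'$ is required to be a \emph{nonempty} arithmetic progression, and nonemptiness is the only property needed: any two $3$-element members share at least one element. So the $3$-element members form an intersecting $3$-uniform family on the ground set $[N]$. The AP condition is not used here, since every intersection of two distinct triples has size at most $2$ and is automatically an AP once nonempty.

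For the second assertion I will apply the Hilton--Milner theorem \cite{HM67} with $k=3$ and $n=N$. It states that for $n\ge 2k+1$, an intersecting $k$-uniform family on $[n]$ with empty total intersection has at most
\[
\binom{n-1}{k-1}-\binom{n-k-1}{k-1}+1
\]
members. The condition $n\ge 2k+1=7$ matches the hypothesis $N\ge 7$. For $k=3$ the bound simplifies as
\[
\binom{N-1}{2}-\binom{N-4}{2}+1=\frac{(N-1)(N-2)-(N-4)(N-5)}{2}+1=\frac{6N-18}{2}+1=3N-8.
\]

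The dichotomy then follows. Either the $3$-element members have a common element, or the family is non-trivial (empty total intersection), in which case it has at most $3N-8$ members. Two degenerate cases need no special treatment: an empty family or a single triple trivially has a common element, and the bound $3N-8$ applies even when no triple is present.

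There is no real obstacle in this argument. The only point to check is that the version of Hilton--Milner being cited requires $n\ge 2k+1$, so that $N\ge 7$ is exactly the needed hypothesis, and that the arithmetic evaluation to $3N-8$ above is correct.
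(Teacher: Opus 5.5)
Your proposal is correct and follows the same route as the paper. Nonemptiness of pairwise intersections makes the triples intersecting, since any intersection of two distinct triples has size at most $2$ and so is automatically an AP. The dichotomy then comes from Hilton--Milner with $k=3$, $n=N\ge 7$; your explicit evaluation $\binom{N-1}{2}-\binom{N-4}{2}+1=3N-8$ fills in the arithmetic the paper leaves implicit.
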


\begin{proof}
Two distinct triples intersect in at most two elements, and any nonempty set of size at most $2$ is an AP; the well-intersecting condition thus reduces exactly to nonempty intersection.
\end{proof}

\begin{proposition}[Progression members without a common centre]\label{prop:helly}
Let $\mathcal A$ be the set of members of $F$ that are APs of size at least $4$, and for $d\ge1$ let $\mathcal A_d$ be those of difference $d$. Then, following \cite[Corollary~2.4]{Sz99}, all members of $\mathcal A_d$ lie in one residue class modulo $d$ and pairwise intersect, so by Helly's theorem for intervals they share a common element; consequently $|\mathcal A_d|\le \frac{N^2}{4d^2}+\frac{4N}{d}$ and, summing over $d$ and using $\sum_{d\ge1}d^{-2}=\pi^2/6$ together with $\sum_{d\le N}d^{-1}\le\ln N+1$,
\[ |\mathcal A|\ \le\ \frac{\pi^2}{24}N^2+4N\ln N+4N. \]
\end{proposition}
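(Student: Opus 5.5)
The plan is to treat one difference $d$ at a time, by the same steps as the statement: first place $\mathcal A_d$ in a single residue class, then produce a common element, then count, and finally sum over $d$.

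\emph{Step 1: one residue class.} Let $P,Q\in\mathcal A_d$. Their intersection is nonempty, so they share an element $x$. Both are APs of difference $d$ through $x$, so both lie in $x+d\mathbb Z$, and hence they have the same residue modulo $d$. Since this holds for every pair, all of $\mathcal A_d$ lies in one class $c+d\Z$.

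\emph{Step 2: a common element.} Inside $c+d\Z$, each member is a full interval of that progression (all terms between its endpoints). Identify $c+dj$ with $j$; the members become integer intervals. Nonempty pairwise intersection is exactly the given well-intersecting hypothesis. Helly's theorem in dimension one (take the largest left endpoint and the smallest right endpoint) then gives an integer point $j_0$ common to all members. Write $x_0=c+dj_0$.

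\emph{Step 3: counting $\mathcal A_d$.} A member of $\mathcal A_d$ is determined by how many terms it extends to the left of $x_0$ and how many to the right. These counts are $l\le (x_0-1)/d$ and $r\le (N-x_0)/d$, with $l+r\ge 3$. So
\[ |\mathcal A_d|\le\Big(\frac{x_0-1}{d}+1\Big)\Big(\frac{N-x_0}{d}+1\Big). \]
By AM--GM the product is at most $\big(\frac{N-1}{2d}+1\big)^2=\frac{(N-1)^2}{4d^2}+\frac{N-1}{d}+1$. This is below $\frac{N^2}{4d^2}+\frac{4N}{d}$ whenever $d\le N$, since $1\le N/d$. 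For $d\ge N$ the family $\mathcal A_d$ is empty anyway, because a $4$-term AP of difference $d$ needs $3d\le N-1$.

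\emph{Step 4: summing over $d$.} Sum the bound over $1\le d\le N$. Use $\sum_{d\ge1} d^{-2}=\pi^2/6$ for the quadratic term and $\sum_{d\le N}d^{-1}\le\ln N+1$ for the linear term. This gives
\[ |\mathcal A|\le \frac{\pi^2}{24}N^2+4N(\ln N+1), \]
which is the stated bound.

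The only step needing care is Step 1. There I will state explicitly that difference-$d$ APs meeting in a point must be congruent modulo $d$; this is the content of the cited \cite[Corollary~2.4]{Sz99}. Everything after it is routine.
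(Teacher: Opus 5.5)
Your proof is correct and follows essentially the same route as the paper: pairwise nonempty intersections put $\mathcal A_d$ in one residue class, Helly for intervals gives a common point $x_0$, you count members by their left and right extents from $x_0$, and you sum over $d$. Your AM--GM step, together with the remark that $\mathcal A_d=\emptyset$ for $d\ge N$, spells out the per-difference bound $|\mathcal A_d|\le \frac{N^2}{4d^2}+\frac{4N}{d}$, which the paper only asserts.
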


\begin{theorem}[Triples of a large AP-flavoured family have a kernel]\label{thm:tristar}
Let $N_0=10^4$. For every $N\ge N_0$ the following holds. Let $F$ be well-intersecting with $|F|>B(N)$, and suppose every member of size at least $4$ is an arithmetic progression. Then all $3$-element members of $F$ contain a common element $c$; moreover $F$ then contains members avoiding $c$, all of which are $2$-element members or progressions of size at least $4$, and every triple $\{c,u,v\}\in F$ satisfies $\{u,v\}\cap A\neq\emptyset$ for each such member $A$.
\end{theorem}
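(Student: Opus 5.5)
The plan is a counting argument by contradiction. Split $F$ into members of size at most $2$ (call them $X$), triples ($Y$), and progressions of size at least $4$ (this is $\mathcal A$). Then $|X|\le N+\binom N2$ crudely, but this is too weak. The point is that $|F|>B(N)\approx N^2/2$ forces $Y$ to be large, since $|\mathcal A|\le(\pi^2/24)N^2+O(N\log N)$ by Proposition~\ref{prop:helly} and $\pi^2/24\approx0.411<1/2$.

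So the first step is to control $X$ properly. The $2$-element members pairwise intersect, because their intersections must be nonempty. An intersecting family of $2$-sets is either a star or a triangle. Together with the singletons, this gives $|X|\le N+O(1)$: at most one singleton $\{x\}$ can occur, and if it does, every other member contains $x$.

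Hence
\[
|Y|\ \ge\ B(N)-|X|-|\mathcal A|\ \ge\ \Big(\tfrac12-\tfrac{\pi^2}{24}\Big)N^2-O(N\log N),
\]
which exceeds $3N-8$ once $N\ge10^4$. Here the constant $\tfrac12-\tfrac{\pi^2}{24}\approx0.089$ comfortably beats $4N\ln N+5N$ at $N=10^4$. By Lemma~\ref{lem:HM} (Hilton--Milner), all triples then share a common element $c$.

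Next I show that some member avoids $c$. Otherwise $F$ is starred, and Theorem~\ref{thm:upper} gives $|F|\le B(N)$, a contradiction. Any member avoiding $c$ is not a triple, since all triples contain $c$. It is also not a singleton $\{x\}$ with $x\ne c$: such a singleton would force every triple to contain $x$ as well, so all triples would contain $\{c,x\}$, leaving only $N-2$ triples and contradicting the lower bound on $|Y|$. By hypothesis the remaining members of size at least $4$ are progressions. So every member avoiding $c$ is a $2$-set or a progression of size at least $4$.

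Finally, fix a member $A\not\ni c$ and a triple $\{c,u,v\}\in F$. Their intersection must be nonempty, and $c\notin A$, so $\{u,v\}\cap A\ne\emptyset$. This is immediate.

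The main obstacle is the bound on $|X|$ and, more generally, making the quantitative inequality airtight at $N=10^4$. That means checking that $N+1+O(1)$ for $X$, plus the bound of Proposition~\ref{prop:helly}, plus $3N-8$, stays below $B(N)$. I expect the check to be routine numerics, but I would carefully verify the case analysis for $X$ (star versus triangle, and the effect of a singleton) so that no factor of $N^2/2$ sneaks in.
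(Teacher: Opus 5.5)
Your proposal is correct and is essentially the paper's argument: the $2$-element members form a star or triangle, Proposition~\ref{prop:helly} bounds the long progressions, Hilton--Milner then forces the triples through a common $c$, and Theorem~\ref{thm:upper} supplies a member avoiding $c$. The only cosmetic difference is that the paper first uses Theorem~\ref{thm:upper} with Lemma~\ref{lem:K1} to exclude singletons outright. You instead allow one singleton in the count and rule out a singleton avoider by counting triples, and that works equally well.
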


\begin{proof}
Since $|F|>B(N)$, $F$ is non-starred by Theorem~\ref{thm:upper}, so by Lemma~\ref{lem:K1} below it has no singleton, and its $2$-element members form an intersecting family of pairs, i.e.\ a star or a triangle: at most $N-1$ members. If the triples did not share a common element then by Lemma~\ref{lem:HM} there are at most $3N-8$ of them, whence by Proposition~\ref{prop:helly}
\[ |F|\ \le\ (N-1)+(3N-8)+\frac{\pi^2}{24}N^2+4N\ln N+4N\ <\ \frac{N^2}{2}-\frac{N}{2}\ \le\ B(N), \]
the last inequality because $B(N)=\binom N2+1+\lfloor(N-1)/4\rfloor$ and $\binom N2=\frac{N^2}2-\frac N2$; the strict inequality holds for all $N\ge400$, since it amounts to $\bigl(\frac12-\frac{\pi^2}{24}\bigr)N>4\ln N+8.5$ with $\frac12-\frac{\pi^2}{24}>0.0887$ --- a contradiction. Hence all triples contain a common $c$. If every member contained $c$ the family would be starred; so some member $A$ avoids $c$, and $A$ is not a singleton and not a triple, leaving the stated forms. The final claim is the intersection condition $\{c,u,v\}\cap A\neq\emptyset$ with $c\notin A$.
\end{proof}

In the setting of Theorem~\ref{thm:tristar} much more can be said about the members avoiding $c$.

\begin{proposition}[Avoiders are long progressions]\label{prop:long}
In the setting and conclusion of Theorem~\ref{thm:tristar} (so $N\ge N_0$), let $F_{\bar c}$ denote the members of $F$ avoiding $c$. Then:
\begin{itemize}
\item[(i)] $F_{\bar c}$ contains no $2$-element member; hence every member of $F_{\bar c}$ is an arithmetic progression with at least $4$ elements.
\item[(ii)] Every member of $F_{\bar c}$ has more than $N/12$ elements; consequently its difference is at most $12$.
\end{itemize}
\end{proposition}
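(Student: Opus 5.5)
The plan is to turn the constraint from Theorem~\ref{thm:tristar} into a vertex-cover condition. Let $G$ be the graph on $[N]\setminus\{c\}$ whose edges are the pairs $\{u,v\}$ with $\{c,u,v\}\in F$. Then $e(G)$ is the number of $3$-element members of $F$. By the last clause of Theorem~\ref{thm:tristar}, every member $A\in F_{\bar c}$ meets every edge of $G$, so $A$ is a vertex cover of $G$. A vertex cover of size $s$ on $N-1$ vertices can meet at most $s(N-2)\le sN$ edges. So it suffices to prove that $G$ has quadratically many edges, with a constant slightly above $1/12$.

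\textbf{Step 1: lower bound on $e(G)$.} I would repeat the accounting in the proof of Theorem~\ref{thm:tristar}. Its first paragraph gives no singletons and at most $N-1$ two-element members. Members of size $\ge4$ are all progressions, so Proposition~\ref{prop:helly} bounds them by $\frac{\pi^2}{24}N^2+4N\ln N+4N$. Since $|F|>B(N)\ge\binom N2+1$, this yields
\[ e(G)\ \ge\ \binom N2+2-(N-1)-\frac{\pi^2}{24}N^2-4N\ln N-4N\ \ge\ \Big(\frac12-\frac{\pi^2}{24}\Big)N^2-4N\ln N-6N, \]
and $\frac12-\frac{\pi^2}{24}>0.0887$.

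\textbf{Step 2: part (i).} If $A=\{x,y\}\in F_{\bar c}$, then every edge of $G$ contains $x$ or $y$, so $e(G)\le 2N$. This contradicts Step 1 as soon as $0.0887N^2>4N\ln N+8N$, which holds comfortably for $N\ge N_0$. Theorem~\ref{thm:tristar} already lists the possible forms of avoiders (not singletons, not triples, size $\ge4$ members are APs). So every avoider is a progression with at least $4$ elements.

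\textbf{Step 3: part (ii).} If $A\in F_{\bar c}$ has $s\le N/12$ elements, then $e(G)\le sN\le N^2/12$. Step 1 gives $e(G)\ge(0.0887-1/12)N^2-O(N\ln N)$ above $N^2/12$, that is, a margin of about $0.0054N^2$ against $4N\ln N+6N$. This is the main obstacle, because the constants are tight. At $N=10^4$ the margin is about $54N$ versus roughly $43N$, so it should hold, but it requires using the exact constant $\frac12-\frac{\pi^2}{24}\approx0.08877$ and avoiding any wasteful rounding. If the margin proves too thin, I would use the sharper bound $e(G)\le s(N-1-s)+\binom s2$ and keep the exact lower-order terms from Proposition~\ref{prop:helly}. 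For the final claim, a progression with $s>N/12$ elements and difference $d$ inside $[N]$ satisfies $(s-1)d\le N-1$. Hence $d<(N-1)/(N/12-1)$, which forces $d\le12$ for $N\ge N_0$.
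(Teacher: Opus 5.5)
Your proposal is correct and is essentially the paper's proof. The paper likewise bounds the number of triples by the number of link pairs meeting a fixed avoider: at most $2N-3$ when the avoider is a pair, and at most $aN$ when it has $a$ elements. It combines this with the bound $N-1$ on $2$-element members and Proposition~\ref{prop:helly} on the progression members to contradict $|F|>B(N)$. The binding condition is $\bigl(\tfrac12-\tfrac1{12}-\tfrac{\pi^2}{24}\bigr)N>4\ln N+O(1)$, which holds at $N_0=10^4$ exactly as your margin computation shows.

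The only difference is bookkeeping: you lower-bound $e(G)$, while the paper upper-bounds $|F|$. Your hedging is unnecessary, since even the rounded constant $0.0887$ leaves enough room. The fallback to the sharper bound $s(N-1-s)+\binom s2$ is therefore not needed.
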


\begin{proof}
(i) Suppose $\{u,v\}\in F$ with $c\notin\{u,v\}$. Every triple of $F$ has the form $\{c,x,y\}$ and must intersect $\{u,v\}$, so its link pair $\{x,y\}$ meets $\{u,v\}$; the number of pairs meeting a fixed pair is at most $2(N-2)+1$, so $F$ has at most $2N-3$ triples. Then, using Proposition~\ref{prop:helly} for \emph{all} members of size at least $4$ (which are APs by hypothesis, wherever located) and at most $N-1$ members of size $\le 2$,
\[ |F|\ \le\ (N-1)+(2N-3)+\frac{\pi^2}{24}N^2+4N\ln N+4N\ <\ \frac{N^2}{2}-\frac{N}{2}\ \le\ B(N) \]
for $N\ge N_0$ (indeed for $N\ge400$, as in the proof of Theorem~\ref{thm:tristar}), a contradiction.

(ii) Let $A\in F_{\bar c}$ with $a=|A|$ elements. Every triple of $F$ is $\{c,x,y\}$ with $\{x,y\}$ meeting $A$, and distinct triples have distinct link pairs, so the number of triples is at most $a(N-1)-\binom a2\le aN$. Hence
\[ |F|\ \le\ (N-1)+aN+\frac{\pi^2}{24}N^2+4N\ln N+4N. \]
If $a\le N/12$ the right-hand side is at most $\bigl(\tfrac1{12}+\tfrac{\pi^2}{24}\bigr)N^2+4N\ln N+5N<\tfrac{N^2}2-\tfrac N2\le B(N)$ for $N\ge N_0$: since $\tfrac1{12}+\tfrac{\pi^2}{24}<0.49457$, the required inequality amounts to $0.00543\,N>4\ln N+5.5$, which holds for all $N\ge 8000$ --- a contradiction. (This is the binding constraint behind the choice $N_0=10^4$; the other steps need only $N\ge400$.) Finally, a progression with more than $N/12$ elements inside $[N]$ has difference less than $12N/(N-12)$, which is smaller than $13$, hence at most $12$, once $N\ge157$.
\end{proof}

Theorem~\ref{thm:tristar} and Proposition~\ref{prop:long} localize a putative AP-flavoured counterexample severely: its quadratic bulk of triples is a star at some $c$, while the members avoiding $c$ are progressions of more than $N/12$ elements and difference at most $12$, each missing the element $c$, pairwise intersecting in progressions, and meeting the link pair of every triple. The remaining endgame --- ruling this configuration out exactly, and removing the AP-flavour hypothesis using Lemma~\ref{lem:gpt} --- is what now separates us from Szab\'o's kernel question in full.

\begin{remark}[The endgame configurations appear self-limiting]\label{rem:endgame}
We describe, without complete proofs, why the localized configuration seems unable to reach $B(N)$. Suppose first that the avoiders of difference $1$ all pass through a common point $p$ (as Helly's theorem forces) and have length at least $L$. A link pair whose two elements straddle $p$ at distance more than about $L$ contains a length-$L$ interval through $p$ strictly between its elements, hence fails to meet that avoider; so the straddling link pairs are confined to a band of about $L^2/2$ pairs around $p$, while one-sided pairs miss the extreme avoiders altogether. On the other hand the number of intervals through $p$ of length at least $L$ is smaller than the number of all intervals through $p$ by essentially the same quantity $L^2/2$: the gain in link pairs and the loss in avoiders cancel, and the configuration tops out near $N^2/4$ members. Larger differences $d\le12$ confine link pairs to residue classes and only lower the total, and a central $c$ forces the interval avoiders to one side of $c$, shrinking both counts further. Turning these cancellations into an exact proof --- uniformly in the position of $c$, the twelve possible differences, and the per-difference Helly points, and then removing the AP-flavour hypothesis via Lemma~\ref{lem:gpt} and an exact analogue of the non-progression bounds of \cite{SiSo81} --- is, in our assessment, the entire remaining content of Szab\'o's kernel question.
\end{remark}

We record next what can be said with no assumption on the large members.

\begin{lemma}\label{lem:K1}
A family containing a singleton is starred. Hence every non-starred family has all members of size at least $2$.
\end{lemma}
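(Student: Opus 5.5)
The argument is immediate from the definitions. Suppose $F$ contains a singleton $\{x\}$. For every other member $A\in F$ the intersection $A\cap\{x\}$ must be nonempty, which forces $x\in A$. The singleton itself obviously contains $x$. Hence every member of $F$ contains $x$, and $F$ is starred through $x$.

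For the second sentence, take the contrapositive. If $F$ is non-starred, the first part shows that $F$ contains no singleton. The empty set cannot be a member either, because any intersection involving it would be empty. Strictly speaking, a family consisting of the empty set alone is vacuously well-intersecting, since there are no pairs to check. However, in the definition of $t(N)$ the sets range over nonempty subsets, as the clique formulation in Section~\ref{sec:comp} makes explicit. More generally, any family with at least two members automatically has nonempty members. So I would state the convention that members are nonempty, or simply note that a non-starred family has at least two members. Either way, every member then has size at least $2$.

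There is no real obstacle here. The only point needing care is this degenerate handling of the empty set and the one-member family. I would address it in a single sentence and not let it obscure the one-line main argument.
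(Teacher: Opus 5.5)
Your argument is correct and is the paper's one-line proof: a singleton $\{x\}$ must meet every other member, so every member contains $x$. Your extra handling of the empty set is a harmless refinement the paper leaves implicit. Note, though, that $\{\emptyset\}$ is itself a one-member non-starred family, so it is the nonemptiness convention, not the claim that non-starred families have at least two members, that disposes of this case.
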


\begin{proof}
If $\{x\}\in F$ then every member meets $\{x\}$, i.e.\ contains $x$.
\end{proof}

\begin{lemma}[Cross-intersecting pairs]\label{lem:K2}
Let $n\ge 3$ and let $\mathcal{A},\mathcal{B}$ be nonempty families of $2$-subsets of an $n$-set such that every member of $\mathcal{A}$ meets every member of $\mathcal{B}$. Then $|\mathcal{A}|+|\mathcal{B}|\le 2n+1$.
\end{lemma}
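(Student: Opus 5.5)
The plan is a finite case analysis on the structure of $\mathcal{B}$, using only that every member of one family meets every member of the other. The key dichotomy is the one already used in the proof of Theorem~\ref{thm:tristar}: a family of $2$-sets either contains two disjoint edges, or it is intersecting, and an intersecting family of $2$-sets is a star or a triangle. The statement is symmetric in $\mathcal{A}$ and $\mathcal{B}$, so I may swap their roles freely.

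\emph{Case 1: $\mathcal{B}$ is a star.} Suppose every member of $\mathcal{B}$ contains $x$, and let $Y$ be its leaf set, with $k=|Y|\ge1$ and $|\mathcal{B}|=k$. A member of $\mathcal{A}$ avoiding $x$ must meet $\{x,y\}$ for every $y\in Y$, so it must contain all of $Y$. At most $n-1$ members of $\mathcal{A}$ contain $x$. I split on $k$.
\begin{itemize}
\item If $k=1$ with $Y=\{y\}$, the members avoiding $x$ contain $y$, giving at most $n-2$ more. So $|\mathcal{A}|\le 2n-3$ and the total is at most $2n-2$.
\item If $k=2$, the only possible member avoiding $x$ is $Y$ itself. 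So $|\mathcal{A}|\le n$ and the total is at most $n+2$.
\item If $k\ge3$, no member avoids $x$. So $|\mathcal{A}|\le n-1$, $|\mathcal{B}|\le n-1$, and the total is at most $2n-2$.
\end{itemize}
All three totals are at most $2n+1$, using $n\ge3$ in the middle case.

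\emph{Case 2: $\mathcal{B}$ is a triangle on $\{p,q,r\}$.} Any member of $\mathcal{A}$ has two elements and must meet all three sides of the triangle. A set with at most one vertex in $\{p,q,r\}$ misses the opposite side, so each member of $\mathcal{A}$ has both vertices in $\{p,q,r\}$. Hence $|\mathcal{A}|\le 3$ and the total is at most $6\le 2n+1$.

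\emph{Case 3: both $\mathcal{A}$ and $\mathcal{B}$ contain two disjoint edges.} By symmetry, Cases 1 and 2 also cover the situation where $\mathcal{A}$ is intersecting, so this is the only case left. Let $e,f\in\mathcal{B}$ be disjoint. Every member of $\mathcal{A}$ meets both, so it has one vertex in $e$ and one in $f$, giving $|\mathcal{A}|\le 4$. Symmetrically $|\mathcal{B}|\le 4$. The existence of two disjoint edges forces $n\ge4$, so the total is $8\le 2n+1$.

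There is no real obstacle in this argument. The only points needing care are these:
\begin{itemize}
\item confirming that the cases are exhaustive, via the star/triangle classification of intersecting $2$-uniform families;
\item the small-$n$ checks, namely $n=3$ in Case 2 (where $6\le7$) and $n\ge4$ in Case 3.
\end{itemize}
I expect the star case with $k=1$ to be the tight one. It gives $2n-2$, which shows the stated bound $2n+1$ is not sharp but is sufficient for its use here.
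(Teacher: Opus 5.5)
Your case analysis is correct and uses the same ingredients as the paper's proof: two disjoint edges in one family confine the other to at most $4$ pairs, and otherwise the family is a star or a triangle. The difference is organizational: you case on $\mathcal{B}$ and invoke symmetry so that the disjoint-edges case gives $4+4$, whereas the paper pairs $|\mathcal{B}|\le 4$ with the cruder $|\mathcal{A}|\le 2n-3$ and lands exactly on $2n+1$. One small correction to your closing aside: for $n=3$ the triangle case ($\mathcal{A}=\mathcal{B}$ the whole triangle) gives $6>2n-2$, so the $k=1$ star is not the tight configuration there, although $2n+1$ is indeed never attained.
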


\begin{proof}
If $\mathcal{A}$ contains two disjoint pairs $e,f$, then every $b\in\mathcal{B}$ has one endpoint in $e$ and one in $f$, so $|\mathcal{B}|\le 4$; fixing $b_0\in\mathcal{B}$, every $a\in\mathcal{A}$ meets $b_0$, so $|\mathcal{A}|\le 2(n-2)+1$; the total is at most $2n+1$. Otherwise $\mathcal{A}$ is an intersecting family of $2$-sets, hence a star or a triangle. If $\mathcal{A}$ is a star at $p$ with $|\mathcal{A}|\ge 3$, a pair avoiding $p$ meets at most two star pairs, so $\mathcal{B}$ is contained in the star at $p$ and the total is at most $2(n-1)$. If $\mathcal{A}$ is a triangle, $\mathcal{B}$ consists of pairs on its three vertices and the total is at most $6$. If $|\mathcal{A}|\le 2$, then $|\mathcal{B}|\le 2(n-2)+1$ and the total is at most $2n+1$.
\end{proof}

\begin{proposition}[Two-star deduction]\label{prop:K3}
Let $F$ be non-starred with $\{x,y\}\in F$. Then every member of $F$ contains $x$ or $y$. Moreover, for every member $B\in F$ with $y\in B$, $x\notin B$, writing $F_x=\{A\in F: x\in A\}$, $Q_B$ for the set of pairs $\{u,v\}\subseteq[N]\setminus(\{x\}\cup B)$, and $K_x$ for the set of bad pairs (with respect to the centre $x$) covered by the members of $F_x$ of size at least $4$,
\[ |F_x|\ \le\ B(N)\ -\ |Q_B\setminus K_x|. \]
\end{proposition}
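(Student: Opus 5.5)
The first assertion is immediate from the definition, and the inequality comes from rerunning the proof of Proposition~\ref{prop:red} for the starred subfamily $F_x$, with one extra set of forbidden triples supplied by $B$.

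For the first assertion, every member $A\in F$ must meet $\{x,y\}$, since $A\cap\{x,y\}$ has to be nonempty. Hence $x\in A$ or $y\in A$. (Non-starredness is not needed for this step.)

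For the inequality, $F_x$ is a starred family through $x$, and it inherits the property that all pairwise intersections are nonempty APs. Following Proposition~\ref{prop:red}, split $F_x=X\cup Y\cup Z$ into members of size $\le2$, exactly $3$, and $\ge4$. Then $|X|\le N$, and $Z$ is exactly the family whose covered bad pairs (relative to the centre $x$) form $K_x$, so $\killop(Z)=|K_x|$.

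The triples in $Y$ all have the form $\{x,u,v\}$ with $u,v\neq x$, and they are restricted in two ways:
\begin{itemize}
\item[(a)] As in Proposition~\ref{prop:red}, $\{u,v\}\notin K_x$, because otherwise $\{x,u,v\}\cap z=\{x,u,v\}$ would be a non-AP for some $z\in Z$ covering $\{u,v\}$.
\item[(b)] $\{u,v\}\notin Q_B$. Indeed, if $u,v\notin B$, then since also $x\notin B$, the triple $\{x,u,v\}$ would be disjoint from $B\in F$, contradicting that intersections are nonempty. This holds whether or not $\{u,v\}$ is bad.
\end{itemize}
Therefore the link pairs of $Y$ avoid $K_x\cup Q_B$, and
\[ |Y|\ \le\ \binom{N-1}{2}-|K_x\cup Q_B|\ =\ \binom{N-1}{2}-\killop(Z)-|Q_B\setminus K_x|. \]

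Now apply Theorem~\ref{thm:lemA} to $Z$, in coordinates relative to $x$ with $\lambda=x-1$ and $\rho=N-x$. It gives
\[ |Z|-\killop(Z)\ \le\ \Big\lfloor\frac{\min(\lambda,\rho)}{2}\Big\rfloor\ \le\ \Big\lfloor\frac{N-1}{4}\Big\rfloor. \]
Adding the three bounds,
\[ |F_x|\ \le\ N+\binom{N-1}{2}+\Big\lfloor\frac{N-1}{4}\Big\rfloor-|Q_B\setminus K_x|\ =\ B(N)-|Q_B\setminus K_x|, \]
using $N+\binom{N-1}2=\binom N2+1$.

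The only step needing care is (b) together with the bookkeeping of the union. The subtraction of $|K_x\cup Q_B|$ must be split as $|K_x|+|Q_B\setminus K_x|$, so that the $\killop(Z)$ term is used exactly once and can be absorbed by Theorem~\ref{thm:lemA}. Pairs in $Q_B$ that are not bad are no problem: $Q_B$ only removes link pairs of triples, and it is never counted against $|Z|$. The hypothesis $y\in B$ guarantees $B\notin F_x$, so $B$ is a member external to $F_x$; this is exactly what makes the extra exclusion (b) available.
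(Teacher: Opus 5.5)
Your proof is correct and is essentially the paper's argument. You rerun Proposition~\ref{prop:red} at the centre $x$, exclude triples whose link pair lies in $Q_B$ because they would miss $B$, write $|K_x\cup Q_B|=|K_x|+|Q_B\setminus K_x|$, and absorb $|K_x|$ into the bound $|Z|\le |K_x|+\lfloor (N-1)/4\rfloor$ from Theorem~\ref{thm:lemA}.

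One small slip in your closing remark: it is $x\notin B$, not $y\in B$, that guarantees $B\notin F_x$, and you do use $x\notin B$ correctly in step (b). The condition $y\in B$ follows automatically from the first assertion and plays no role in the inequality.
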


\begin{proof}
Every member meets $\{x,y\}$, giving the covering statement. For the deduction, refine Proposition~\ref{prop:red} at the centre $x$: a triple $T=\{x,u,v\}\in F_x$ must satisfy that $T\cap B$ is a nonempty AP; since $x\notin B$, this forces $\{u,v\}\cap B\neq\emptyset$, so no triple of $F_x$ uses a pair from $Q_B$ (pairs containing $y$ meet $B$ and are unaffected). Hence the triples of $F_x$ number at most $\binom{N-1}2-|K_x|-|Q_B\setminus K_x|$, while the members of size at most $2$ number at most $N$ and, by Theorem~\ref{thm:lemA}, the members of size at least $4$ number at most $|K_x|+\lfloor(N-1)/4\rfloor$. Summing gives the claim.
\end{proof}

\begin{corollary}\label{cor:K4}
If $F$ is non-starred with a $2$-element member $\{x,y\}$ and $|F|>B(N)$, then for every $y$-only member $B$ the number of $y$-only members exceeds $|Q_B\setminus K_x|\ \ge\ \binom{N-1-|B|}{2}-|K_x|$, and symmetrically with $x,y$ exchanged. In particular either every $y$-only member is large, or the $y$-only members are numerous --- yet by Lemma~\ref{lem:K2} the $x$-only and $y$-only triples of $F$ together number at most $2N+1$ whenever both kinds occur, so the numerous side must consist almost entirely of members of size at least $4$, which are in turn throttled by Theorem~\ref{thm:lemA} at their own centre.
\end{corollary}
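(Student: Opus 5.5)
The plan is to obtain the corollary by a direct count from Proposition~\ref{prop:K3}, followed by a short application of Lemma~\ref{lem:K2} for the qualitative ``in particular'' remark. Fix the non-starred family $F$ with $\{x,y\}\in F$ and $|F|>B(N)$, and fix a $y$-only member $B$, i.e.\ $y\in B$ and $x\notin B$.

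\textbf{Step 1: the count.} By the first assertion of Proposition~\ref{prop:K3}, every member contains $x$ or $y$. Hence $F$ is the disjoint union of $F_x$ and the $y$-only members, so $\#\{y\text{-only}\}=|F|-|F_x|$. The second assertion of Proposition~\ref{prop:K3} gives $|F_x|\le B(N)-|Q_B\setminus K_x|$. Together with $|F|>B(N)$, subtraction yields
\[
\#\{y\text{-only}\}>|Q_B\setminus K_x|.
\]

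\textbf{Step 2: the lower bound.} Since $x\notin B$, the set $[N]\setminus(\{x\}\cup B)$ has exactly $N-1-|B|$ elements, so $|Q_B|=\binom{N-1-|B|}{2}$. Then $|Q_B\setminus K_x|\ge|Q_B|-|K_x|$ gives the stated bound. The symmetric statement follows by exchanging the roles of $x$ and $y$, which is legitimate because Proposition~\ref{prop:K3} makes no distinction between the two elements of the $2$-element member.

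\textbf{Step 3: the ``in particular'' clause.} For the triple bound, take an $x$-only triple $\{x,u,v\}$ (so $y\notin\{u,v\}$) and a $y$-only triple $\{y,s,t\}$. These two members must have nonempty intersection. Since $x\notin\{y,s,t\}$ and $y\notin\{x,u,v\}$, the pairs $\{u,v\}$ and $\{s,t\}$ must meet inside $[N]\setminus\{x,y\}$. Distinct triples have distinct link pairs, so the link pairs form two nonempty cross-intersecting families of $2$-subsets of an $(N-2)$-set. When $N-2\ge3$, Lemma~\ref{lem:K2} bounds their total by $2(N-2)+1\le 2N+1$.

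The remaining assertion is qualitative. If every $y$-only member is small, then $\binom{N-1-|B|}{2}$ is quadratic, so there are many $y$-only members. By the triple bound, all but $O(N)$ of these have size at most $2$ or at least $4$. Members of size at most $2$ are also at most linear in number, since they contain $y$. So the bulk consists of members of size at least $4$, which are controlled at centre $y$ by Theorem~\ref{thm:lemA}.

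\textbf{Main obstacle.} I expect nothing deep here; the only care needed is the bookkeeping. One must check that the decomposition $F=F_x\sqcup\{y\text{-only}\}$ is exact, and that Lemma~\ref{lem:K2} is applied on the $(N-2)$-element ground set with both families nonempty, which is exactly the hypothesis ``whenever both kinds occur''.
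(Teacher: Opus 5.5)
Your Steps 1--2 are exactly the intended derivation; the paper gives no separate proof. You write $F=F_x\sqcup\{y\text{-only}\}$ via the covering statement, subtract the bound of Proposition~\ref{prop:K3} from $|F|>B(N)$, and use $|Q_B|=\binom{N-1-|B|}{2}$. Your application of Lemma~\ref{lem:K2} to the link pairs on $[N]\setminus\{x,y\}$ is the intended triple bound, even sharpened to $2N-3$.

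The one slip is in your informal paragraph. There you go from ``$\binom{N-1-|B|}{2}$ is quadratic'' to ``many $y$-only members'' while silently dropping the $-|K_x|$ term. Since $|K_x|$ can itself be nearly $\binom{N-1}{2}$ (e.g.\ if $[N]\in F_x$), that step needs $K_x$ not to absorb most of $Q_B$. The paper's own heuristic ``in particular'' glosses over this point just as much.
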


Thus a counterexample passing through a $2$-element member is forced into a narrow regime of large, mutually near-progression members; the remaining open regimes for Problem~\ref{prob:kernel} are this large-member regime and the families whose minimum member size is $3$ or more.

\begin{remark}[Primitive counting does not generalize]\label{rem:cex}
One might hope to prove Theorem~\ref{thm:lemA} by generalizing Lemma~\ref{lem:B} verbatim, counting only \emph{primitive} covered bad pairs. That statement is false: take the three window intervals $\{-2,\dots,2\}$, $\{-2,\dots,1\}$, $\{-1,\dots,2\}$ (adjoining $0$) together with $z=\{0,6,10,15\}$. All pairwise intersections are APs and $|S|=4$, yet only two primitive bad pairs ($\{-2,1\}$ and $\{-1,2\}$) are covered, since the three bad pairs of $z$ have gcds $2,3,5$. The full count is safe --- indeed all three pairs of $z$ are private, illustrating Theorem~\ref{thm:private} --- and this is why crooked members must be credited their imprimitive kills, as the proof of Theorem~\ref{thm:lemA} does.
\end{remark}

We also note that the sequence $t(3),\dots,t(12)$ is not currently in the OEIS, and that the database entry \cite{EP272} explicitly requests an associated integer sequence; we intend to submit it. Finally, it would be interesting to carry out the analogous exact analysis for the variants of \cite{SiSo81} in which all pairwise intersections must be APs of at least $k$ terms, $k\ge2$; there even the qualitative question of \cite{SiSo81,Sz99}, whether the extremal systems consist of arithmetic progressions only, remains open.

\subsection*{Acknowledgements}
The author used Claude (Anthropic) as an assistive tool for some computations and drafting. All proofs and computational claims have been checked by the author, who takes full responsibility for their correctness; where a result relies on computer verification, sufficient detail is given in Section~\ref{sec:comp} to allow independent replication.

\end{document}